\newcommand{\problemtitle}[1]{\gdef\@problemtitle{#1}}
\newcommand{\probleminput}[1]{\gdef\@probleminput{#1}}
\newcommand{\problemquestion}[1]{\gdef\@problemquestion{#1}}
  \par\addvspace{.5\baselineskip}
  \par\addvspace{.5\baselineskip}
\newcommand{\@chapapp}{\relax}%
\newtheorem{theorem}{Theorem}
\newtheorem{fact}{Fact}
\newtheorem{lemma}{Lemma}
\newtheorem{definition}{Definition}
\newtheorem{conjecture}[theorem]{Conjecture}
\journal{Discrete Applied Mathematics VSI: DAM-GO XI}
\begin{document}
\begin{frontmatter}
\title{An infinite family of Type~1 fullerene nanodiscs\tnoteref{t2}}
\author[ufrj]{Mariana Cruz}
\ead{mmartins@cos.ufrj.br}
\author[ufg]{Diane Castonguay}
\ead{diane@inf.ufg.br}
\author[ufrj]{Celina de Figueiredo}
\ead{celina@cos.ufrj.br}
\author[uerj]{Diana Sasaki}
\ead{diana.sasaki@ime.uerj.br}
\address[ufrj]{Federal University of Rio de Janeiro, Rio de Janeiro, Brazil}
\address[ufg]{Federal University of Goiás, Goiânia, Brazil}
\address[uerj]{Rio de Janeiro State University, Rio de Janeiro, Brazil}

\tnotetext[t2]{Partially supported by CNPq, FAPERJ and CAPES.}
\begin{abstract}
A total coloring of a graph colors all its elements, vertices and edges, with no adjacency conflicts.
The Total Coloring Conjecture (TCC) is a sixty year old challenge, says that every graph admits a total coloring with at most maximum degree plus two colors, and many graph parameters have been studied in connection with its validity.
If a graph admits a total coloring with maximum degree plus one colors, then it is Type~1, whereas it is Type~2, in case it does not admit a total coloring with maximum degree plus one colors but it does satisfy the TCC.
Cavicchioli, Murgolo and Ruini proposed in 2003 the hunting for a Type~2 snark with girth at least 5.
Brinkmann, Preissmann and  Sasaki in 2015 conjectured that there is no Type~2 cubic graph with girth at least 5.

We investigate the total coloring of fullerene nanodiscs, a class of cubic planar graphs with girth~$5$ arising in Chemistry. 
We prove that the central layer of an arbitrary fullerene nanodisc is $4$-total colorable, a necessary condition for the nanodisc to be Type~1.
We extend the obtained $4$-total coloring to a $4$-total coloring of the whole nanodisc, when the radius satisfies $r = 5+3k$, providing an infinite family of Type~1 nanodiscs.
\begin{keyword}
    total coloring \sep girth \sep fullerene graphs \sep nanodiscs
    \end{keyword}
\end{abstract}
\end{frontmatter}

\section{The large girth Type~1 conjecture}
\label{s:intro}

A total coloring of a graph $G$ is a color assignment from set $E \cup V$, where $E$ denotes the set of edges and $V$ denotes the set of vertices of the graph, such that adjacent or incident elements have different colors. A $k$-total coloring of a graph $G$ is a total coloring that uses a set of $k$ colors, and the graph is $k$-total colorable if it admits a $k$-total coloring. 
The total chromatic number $\chi''(G)$ is the smallest natural $k$ for which $G$ is $k$-total colorable.
Behzad and Vizing~\cite{behzad:65,vizing:64} independently conjectured the Total Coloring Conjecture (TCC) that for any simple graph $G$, we have $\chi''(G) \leq \Delta (G) + 2$. If $\chi''(G) = \Delta(G) + 1$, then the graph is called Type~1; if $\chi''(G) = \Delta (G) + 2$, then the graph is called Type~2. The TCC has been settled for cubic graphs~\cite{vijayaditya1971total}, but for a general degree graph it remains open for sixty years. The total chromatic number of cycle graphs $C_n$, well-known in the literature due to Yap~\cite{yap1996}, is the foundation of our findings, since we consider the relation between the girth and the TCC.\\


\begin{theorem}[Yap, 1996~\cite{yap1996}]\label{3coloringcn}
Let $G$ be the cycle graph $C_n$. Then

$$
\chi''(G) = \begin{cases} 3, & \mbox{if } n \equiv 0\mod3; \\ 4, & \mbox{otherwise. } \end{cases}
$$

\end{theorem}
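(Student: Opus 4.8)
The plan is to prove the total chromatic number of the cycle $C_n$ by treating the two cases separately and in each case constructing an explicit total coloring together with a lower-bound argument. I will label the vertices $v_0, v_1, \dots, v_{n-1}$ cyclically and write $e_i$ for the edge $v_i v_{i+1}$ (indices mod $n$), so that the elements to be colored are the $n$ vertices and the $n$ edges, and each vertex $v_i$ is incident to $e_{i-1}$ and $e_i$ and adjacent to $v_{i-1}$ and $v_{i+1}$.

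First I would establish the lower bound $\chi''(C_n) \geq 3$ for all $n \geq 3$: since $\Delta(C_n) = 2$, the TCC-type inequality $\chi''(G) \geq \Delta(G)+1$ already forces at least $3$ colors, because any vertex together with its two incident edges forms a set of three mutually incident/adjacent elements requiring three distinct colors. This handles the "easy" direction of the $n \equiv 0 \pmod 3$ case. For the upper bound in that case, I would exhibit the periodic pattern that repeats with period $3$: assign to the pair $(v_i, e_i)$ the colors determined by $i \bmod 3$, for instance coloring $v_i$ with color $(i \bmod 3)$ and $e_i$ with the color in $\{1,2,3\}$ distinct from the colors of its two endpoints. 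Since $3 \mid n$, this pattern closes up consistently around the cycle with no conflict at the wrap-around, giving a valid $3$-total coloring.

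The main content is the case $n \not\equiv 0 \pmod 3$, where I must show both $\chi''(C_n) \leq 4$ and $\chi''(C_n) \geq 4$. The upper bound $\leq 4$ is straightforward: a cycle is $4$-total colorable for every $n \geq 3$, for example by using the period-$3$ pattern along most of the cycle and spending the fourth color only to patch the one or two positions near the wrap-around where the residues fail to match. The harder direction is the lower bound: I must argue that $3$ colors cannot suffice when $3 \nmid n$. The key combinatorial obstruction is that in any proper $3$-total coloring of $C_n$, the coloring is forced to be (essentially) periodic with period $3$. Concretely, once the colors of $v_0$ and $e_0$ are fixed, the requirement that each edge differ from both its endpoints and each consecutive vertex pair differ propagates a unique forced choice around the cycle, and this forcing has period exactly $3$; closing the cycle consistently then requires $n \equiv 0 \pmod 3$.

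The step I expect to be the main obstacle is making the periodicity argument for the lower bound fully rigorous, i.e. proving that in the $3$-color regime the coloring admits \emph{no} freedom and is genuinely forced to be $3$-periodic. The clean way to do this is a counting argument on color classes: with only $3$ colors available, at each vertex the colors of $v_i, e_{i-1}, e_i$ are a permutation of $\{1,2,3\}$, which determines $e_i$ from $v_i$ and $e_{i-1}$, and an analogous constraint at $v_{i+1}$ forces the triple to shift cyclically. I would formalize this as a statement that the sequence of color-triples evolves by a fixed cyclic permutation of order $3$, so that returning to the start after $n$ steps requires $3 \mid n$. An alternative, arguably cleaner, route is to invoke the known formula for the chromatic index / vertex coloring interplay, or to observe that a $3$-total coloring of $C_n$ induces a proper $3$-coloring of an auxiliary graph whose structure encodes the divisibility; I would choose whichever makes the forcing most transparent and then conclude that $\chi''(C_n) = 4$ exactly when $n \not\equiv 0 \pmod 3$.
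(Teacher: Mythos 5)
Your argument is correct, but there is nothing in the paper to compare it against: Theorem~\ref{3coloringcn} is quoted from Yap's book and the paper supplies no proof of its own, only the citation. Your sketch is a legitimate self-contained derivation. The lower bound $\chi''(C_n)\geq 3$ from a vertex and its two incident edges is immediate; the period-$3$ construction closes up exactly when $3\mid n$; and the forcing argument for the case $3\nmid n$ is sound, since with only three colors the triple $\{v_i, e_{i-1}, e_i\}$ at each vertex and the triple $\{e_i, v_i, v_{i+1}\}$ at each edge must each exhaust the palette, so the pair $\bigl(c(e_{i-1}), c(v_i)\bigr)$ evolves by a fixed cyclic shift of order $3$ and the cycle can only close when $n\equiv 0 \pmod 3$. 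The only place you should tighten the write-up is the upper bound $\chi''(C_n)\leq 4$ for $3\nmid n$: ``patch the wrap-around with color $4$'' is believable but should be made explicit for the two residues $n\equiv 1$ and $n\equiv 2 \pmod 3$ (e.g., by viewing the total coloring as a proper coloring of the cyclic sequence $v_0,e_0,v_1,e_1,\dots$ in which any three consecutive elements must receive distinct colors). It is worth noting that your periodicity/forcing step is precisely the structural fact the paper later relies on in Lemma~\ref{l:centrallayer4totalc}, namely that the $3$-total coloring of a cycle of length divisible by $3$ ``colors the elements in circular order''; so your proposal actually justifies an assertion the paper takes on faith from the reference.
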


In 2003, Cavicchioli et al.~\cite{Cavicchioli2003} asked for a Type~2 snark with girth at least~5, where snarks are cyclically 4-edge-connected cubic graphs that do not allow a 3-edge-coloring defined in the search for counterexamples to the Four-Color Conjecture~\cite{Gardner1976}. 
Since then, for more than twenty years, researchers have been hunting with no success a Type~2 snark with girth at least 5~\cite{Campos2011,Dantas2023}.

In 2015, as neither Type~2 cubic graphs with girth at least 5 nor Type~2 snarks were known, Brinkmann et al.~\cite{Brinkmann} realized that the conjecture was taking two steps at once, and the two requirements of being a snark and having girth at least 5 should better be treated independently. In doing so, they gave constructions of Type~2 snarks with girth 3 and 4, and proposed the following conjecture.


 \begin{conjecture}[Brinkmann, Preissmann and Sasaki, $2015$~\cite{Brinkmann}]\label{conjecturegirth5}
There is no Type~2 cubic graph with girth at least 5.
    \end{conjecture}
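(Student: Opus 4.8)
The plan is to attack Conjecture~\ref{conjecturegirth5} by first translating Type~1 for cubic graphs into a matching/partition statement, and then arguing via a minimal counterexample. Since $\Delta = 3$, the conjecture concerns the boundary case $\Delta + 1 = 4$, and by the settled TCC for cubic graphs~\cite{vijayaditya1971total} every cubic graph is either Type~1 or Type~2; hence the conjecture is equivalent to the assertion that every cubic graph of girth at least $5$ is $4$-total colorable. The first step is the key reformulation: in any $4$-total coloring of a cubic graph, each vertex $v$ together with its three incident edges must receive all four colors, so the color $c(v)$ and its three incident edge colors form a permutation of $\{1,2,3,4\}$. Writing $V_i$ for the vertices colored $i$ and $M_i$ for the edges colored $i$, this forces the $V_i$ to be four independent sets and each $M_i$ to be a perfect matching of $G - V_i$, with $M_1,\dots,M_4$ partitioning $E(G)$. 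Thus the conjecture becomes: every cubic graph of girth at least $5$ admits such a coupled independent-set/perfect-matching decomposition.

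Second, I would pass to a smallest counterexample $G$ and strip away the easy structure. Standard arguments along small edge cuts---recoloring the two sides independently and reconciling the boundary colors, using Theorem~\ref{3coloringcn} to control the colorings forced along the cycles that a cut creates---should let me assume that $G$ is cyclically $4$-edge-connected with no separating configuration of bounded size. The heart of the attempt is then a reducibility step: exploiting that girth at least $5$ makes every ball of radius two a tree, I would delete a small local configuration, $4$-total color the smaller graph by minimality, and re-insert the configuration, repairing the few conflicts by alternating-path (Kempe-type) swaps between two matchings $M_i, M_j$, which amounts to recoloring a path alternating colors $i$ and $j$. The abundance of locally free colors guaranteed by the large girth is what should make these repairs succeed.

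The main obstacle, and the reason the full conjecture remains open while this paper settles only the restricted nanodisc subfamily, is the absence of a global invariant for cubic graphs of girth at least $5$. In the planar Four-Color setting one obtains a finite unavoidable set of configurations from Euler's formula via discharging; but girth-$5$ cubic graphs need not be planar and occur with arbitrarily large girth, so no analogous counting forces a bounded unavoidable set. Worse, this class contains snarks, where class-$2$ edge-coloring behavior interacts adversarially with the vertex coloring, so the matchings $M_i$ and the independent sets $V_i$ cannot be chosen independently. Controlling this interaction around the unavoidable length-$5$ cycles is precisely the crux; a complete proof would seem to require either a genuinely new structural invariant for these graphs or a computer-assisted reducibility framework in the spirit of the Four-Color Theorem, but adapted to total colorings---which is why the present work instead establishes the conjecture for a concrete infinite family.
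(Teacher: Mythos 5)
This statement is a conjecture, not a theorem of the paper: the paper cites it from Brinkmann, Preissmann and Sasaki (2015) and offers no proof of it; it remains open, and the paper's contribution is only to verify it for the restricted infinite family of fullerene nanodiscs $D_r$ with $r = 5+3k$. Read as a proof, your text therefore has a gap that is essentially the entire argument. Your first paragraph is a correct reformulation: since the TCC is settled for cubic graphs, Type~1 is equivalent to $4$-total colorability, and in a $4$-total coloring of a cubic graph each color class consists of an independent set $V_i$ together with a perfect matching $M_i$ of $G - V_i$. But from the second paragraph onward nothing is actually established. You never exhibit a single reducible configuration; you never prove that a smallest counterexample must be cyclically $4$-edge-connected (the recoloring-across-small-cuts step is asserted, not carried out, and Theorem~\ref{3coloringcn} concerns total colorings of cycles, not the boundary reconciliation you would need); and the Kempe-type repairs you invoke are precisely the step that is not available in general for total colorings, since a swap of colors $i$ and $j$ must simultaneously respect vertex and edge constraints and no analogue of the Vizing fan/chain machinery is known to make such swaps always succeed in cubic graphs of girth $5$. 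Your own third paragraph concedes that the crux is unresolved, which means the text is a research program, not a proof.

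For the record, the paper does not attempt the conjecture in this generality. It proves that the central layer of every nanodisc is $4$-total colorable (Lemma~\ref{l:centrallayer4totalc}), shows that the natural extension with monochromatic radial edges fails (Lemma~\ref{l:notextend}), and then constructs an explicit block decomposition with mutually compatible $4$-total colorings to settle $D_r$ for $r = 5+3k$. There is thus no proof in the paper against which your route could be compared; the honest conclusion is that the conjecture is still open and your proposal does not close it.
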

Motivated by this conjecture, we investigate the total coloring problem considering  
cubic planar graphs with large girth that model chemical structures: the fullerene nanodiscs.

Section~\ref{s:fullerene} defines our target class of fullerene nanodiscs, Section~\ref{s:central} proves that the natural strategy of using Yap's total coloring for the cycles establishes that a proper subgraph is Type~1, 
Section~\ref{s:extend} discusses the impossibility of extending the obtained 4-total coloring to the whole nanodisc and defines a 4-total coloring for the smallest fullerene nanodisc,
Section~\ref{s:block} establishes a suitable decomposition, 
and Section~\ref{s:Type1} considers the 4-total colorings developed in Sections~\ref{s:central} and~\ref{s:extend}  to provide the claimed infinite family of Type~1 nanodiscs.

\section{The fullerene nanodiscs are cubic graphs with girth~5}
\label{s:fullerene}

Fullerene is a spherically shaped molecule built entirely from carbon atoms. The carbon atoms form rings, each of which is either a pentagon or a hexagon. 
Each atom has chemical bond with exactly three other atoms. 
The first fullerene, known as a ``Buckminsterfullerene'', was discovered in 1985, see~\cite{c60}.
Kroto, Curl, and Smalley were subsequently awarded the 1996 Nobel Prize in Chemistry for their roles in the discovery of buckminsterfullerene and the related class of molecules, the fullerenes.
Molecular graphs of fullerenes are called fullerene graphs. Fullerene graphs are cubic, 3-connected, planar graphs with only pentagonal and hexagonal faces. 
A face defined by a triangle or by a cycle of length larger than 6 is forbidden in a fullerene graph. 
It follows from the Euler's Polyhedron Formula that any fullerene graph has exactly 12 pentagonal faces. The total number $f$ of faces in a fullerene graph with $n$ vertices is equal to $n/2 + 2$. Therefore, the number of hexagonal faces is exactly $n/2 - 10$.
A survey on fullerene graphs is presented in~\cite{andova2016}.

We shall focus on Fullerene nanodiscs $D_{r}$, $r \geq 2$.
The considered circular planar embedding of $D_r$ has its faces arranged into layers, one layer next to the nearest previous layer starting from a hexagonal cover until we reach the other  hexagonal cover. 
The key property of fullerene nanodiscs is that the 12 pentagonal faces lie in the central layer.
The distance between the inner (outer) layer and the central layer is given by the radius parameter $r \geq 2$.
Figure~\ref{f:smallestD2D3} shows the smallest nanodiscs $D_{2}$ and $D_{3}$, where we highlighted with blue color in the central layer the 12 pentagonal faces.

\begin{figure}[ht]
\centering
\includegraphics[scale=0.60]{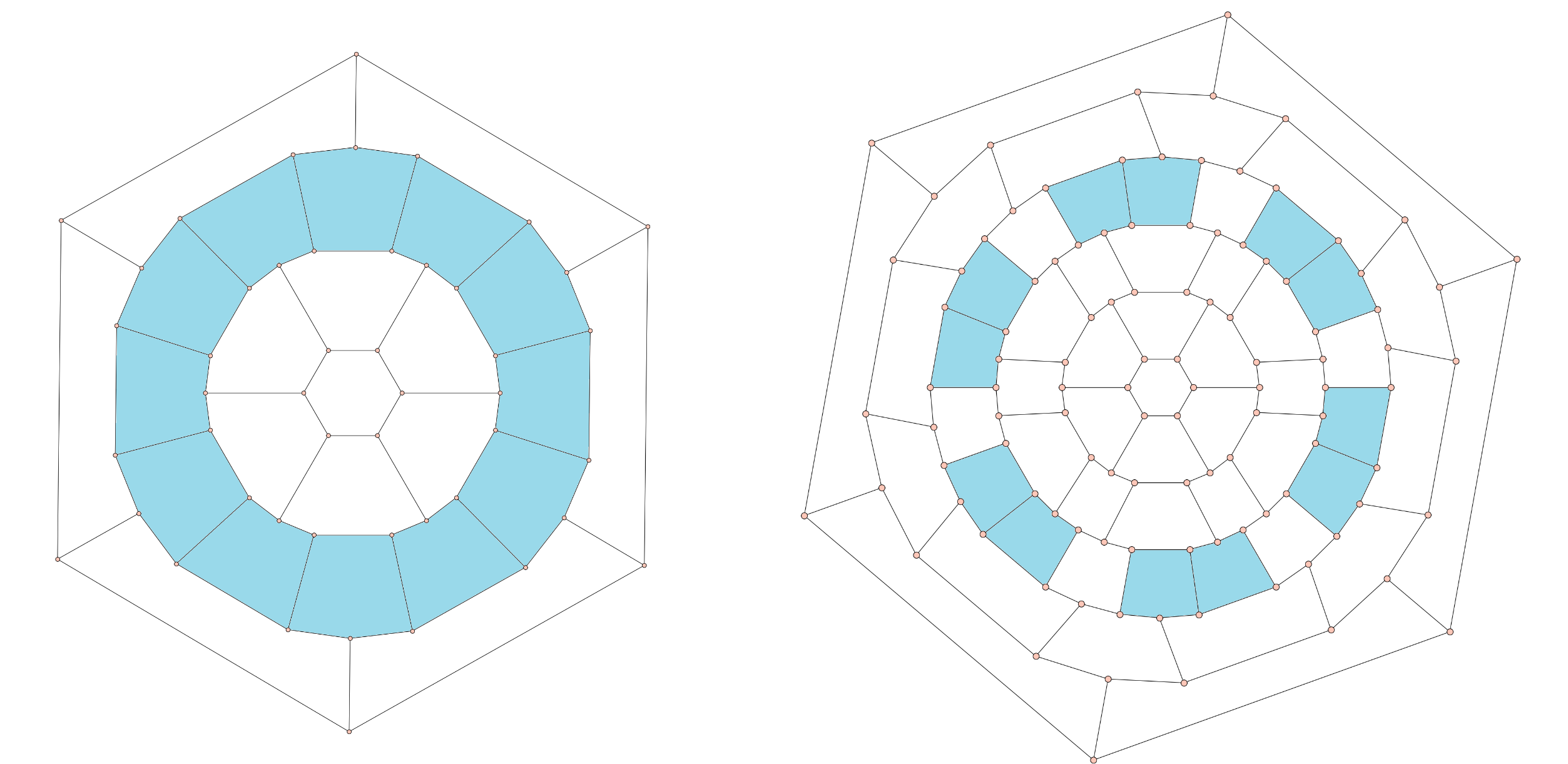}  
\caption{The smallest nanodiscs $D_{2}$ and $D_{3}$.}
\label{f:smallestD2D3}
\end{figure}
\FloatBarrier

The face sequence
$\left\{1,6,12,18,\ldots ,6(r-1),6r,6(r-1),\ldots ,18,12,6,1\right\}$  
provides the amount of faces on each layer of the nanodisc graph $D_{r}$. 
The layers with 6 faces are called inner and outer, respectively.
The unique layer with $6r$ faces is called central.
The $12$ pentagonal faces are distributed in the central layer among its $6r$ faces with the other $(6r-12)$ hexagonal faces.
The layers besides the central layer are called hexagonal layers.
The cycle sequence
$
\{C_6, C_{18}, \ldots, C_{12r-18},  C_{12r-6}, C_{12r-6}, C_{12r-18}, \ldots, C_{18}, C_6 \}
$  
provides the sizes of the auxiliary cycles that define the layers. 
The edges that connect auxiliary cycles are called radial edges and define a perfect matching of $D_r$.
A nanodisc $D_r$ contains 12$r^2$ vertices, 18$r^2$ edges, $2r+1$ layers, $2r$ auxiliary cycles and has girth~$5$. 

Since the number of vertices of every auxiliary cycle of a nanodisc is divisible by 3 and the radial edges define a perfect matching in $D_r$, a natural strategy to obtain a 4-total coloring is to use one color for all the radial edges and by Theorem~\ref{3coloringcn} use the other 3 colors in each auxiliary cycle. 
We shall see in Section~\ref{s:central} that the natural strategy gives a 4-total coloring of the central layer.
Unfortunately, as we shall see in Section~\ref{s:extend}, in order to extend this  4-total coloring to a 4-total coloring of the whole nanodisc, we will be forced to use all 4 colors in the other auxiliary cycles. 

\subsection{Some properties of the circular planar embedding}
\label{ss:planar}

We explicit some properties of the circular planar embedding that will be used in our proofs.
We say that a hexagon is balanced if it contains three vertices in each auxiliary cycle defining its layer.
The other faces of the planar embedding are unbalanced and contain just two vertices in one of the auxiliary cycles defining its layer.
The central layer is defined by the two auxiliary cycles $C_{12r-6}$ and $6r$ radial edges, and contains $6r$ faces, consisting of 12 pentagons and $6r - 12$ hexagons. 
Each pentagon has two vertices in one auxiliary cycle $C_{12r-6}$ and three vertices in the other auxiliary cycle $C_{12r-6}$. We say that two  pentagons are partitioned in the same way if they contain two vertices in the same auxiliary cycle $C_{12r-6}$.
Observe in Figure~\ref{f:smallestD2D3} the central layer and its 12 pentagons,
of the smallest nanodiscs $D_{2}$ and $D_{3}$.
Considering the circular embedding, when we traverse the central layer, say in clockwise, starting in a pentagonal face partitioned in a certain way, each time we meet the next pentagonal face, it is partitioned differently of the previous one. 
We say that two pentagons are consecutive, if when we traverse the central layer, say in clockwise, we meet no 
pentagonal face between them.
In particular, consecutive pentagons are partitioned differently.
We say that two faces are next to each other, if when we traverse the central layer, say in clockwise, we meet no face between them.
Observe in Figure~\ref{f:smallestD2D3} that every hexagon in the central layer of $D_{3}$ is balanced.

\begin{figure}[!htb]
    \centering
    \includegraphics[scale=0.15]{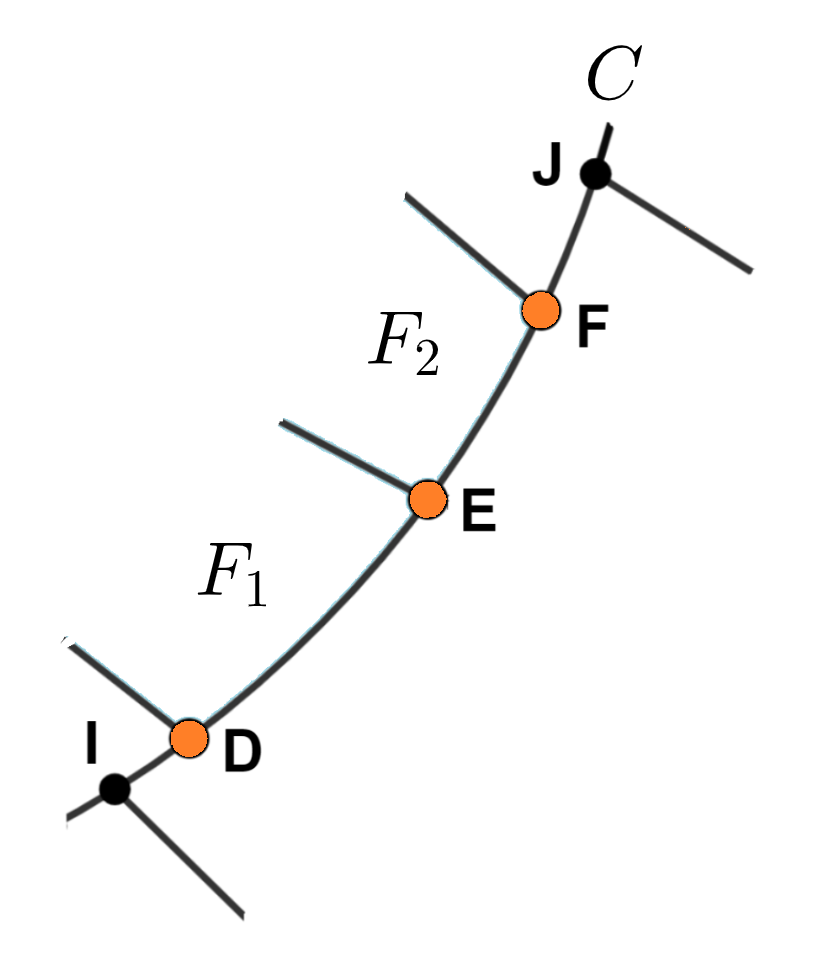}
    \caption{Vertices $I,D,E,F,J$ force the existence of a Large Forbidden Face.}
    \label{f:largeface}
\end{figure}
\FloatBarrier

\begin{lemma}[Large Forbidden Face]
\label{forbiddenfacelemma}
Let $F_1$ and $F_2$ be two faces next to each other
of layer $L$ 
containing more than 6 faces. Consider the two auxiliary cycles defining $L$. We cannot have the same auxiliary cycle containing precisely two vertices of $F_1$ and two vertices of $F_2$.
\end{lemma}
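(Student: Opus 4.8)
The plan is to argue by contradiction: assume that one of the two auxiliary cycles defining $L$, call it $C$, contains exactly two vertices of $F_1$ and exactly two vertices of $F_2$, and then exhibit a face of length at least $7$, which is impossible in a fullerene graph.

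First I would pin down the local picture along $C$. Since $F_1$ and $F_2$ are next to each other, they share a radial edge; let its endpoint on $C$ be $E$, and let $D$ and $F$ be the endpoints on $C$ of the other two radial edges bounding $F_1$ and $F_2$. The hypothesis that $C$ meets each of $F_1,F_2$ in exactly two vertices forces the boundary arcs of $F_1$ and $F_2$ along $C$ to be single edges, so $D,E,F$ are three consecutive vertices of $C$. Because $D,E,F$ are endpoints of radial edges of $L$ and $D_r$ is cubic, each of $D,E,F$ uses its two cycle edges plus its single radial edge, and that radial edge points into $L$, towards the other auxiliary cycle of $L$. In particular, none of $D,E,F$ carries a radial edge of the layer $L'$ lying on the opposite side of $C$.

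Next I would locate the forbidden face inside $L'$. The hypothesis that $L$ contains more than $6$ faces guarantees that $L'$ is again a genuine layer of the embedding and not a one-face hexagonal cover, so the faces of $L'$ are delimited by the radial edges of $L'$, and each such face has at least two vertices on the far auxiliary cycle $C''$ of $L'$. Since none of $D,E,F$ is an endpoint of a radial edge of $L'$, no radial edge of $L'$ separates them on the $L'$-side of $C$, whence a single face $F'$ of $L'$ is incident to the whole arc $D\,E\,F$. Walking along $C$ away from $E$ in each direction until the first endpoints $I$ and $J$ of radial edges of $L'$ are met, I obtain the two vertices that bound $F'$ on $C$; they are distinct from $D,E,F$, which carry no radial edge of $L'$. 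Thus $I,D,E,F,J$ are five distinct vertices of $C$ lying on the boundary of $F'$, exactly as in Figure~\ref{f:largeface}.

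Finally I would count the boundary of $F'$: the five vertices $I,D,E,F,J$ all lie on $C$, while $F'$ additionally meets the far cycle $C''$ in at least two (distinct) vertices, and these two contributions are disjoint. Hence $F'$ is bounded by at least seven vertices, i.e.\ it is a face of length at least $7$, which is forbidden in a fullerene graph, giving the contradiction. I expect the main obstacle to be the bookkeeping that keeps the argument uniform across cases: verifying that $D,E,F$ are genuinely consecutive and all point into $L$ for either choice of $C$ (inner or outer, and including the central layer with its pentagons), and, above all, invoking the ``more than $6$ faces'' hypothesis to exclude the degenerate situation in which $L'$ would be a hexagonal cover, where $F'$ has no far cycle and the count would collapse.
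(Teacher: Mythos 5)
Your proof is correct and follows essentially the same route as the paper's: it produces the path $I,D,E,F,J$ of five vertices on the auxiliary cycle $C$ and concludes that these must all lie on a single face of the adjacent layer $L'$, which is then a forbidden face of length at least $7$. You additionally spell out several steps the paper leaves implicit (why $D,E,F$ are consecutive with their radial edges pointing into $L$, the explicit count to at least seven boundary vertices using the two vertices on the far cycle of $L'$, and the role of the more-than-six-faces hypothesis in excluding the degenerate hexagonal cover), but the underlying argument is the same.
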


\begin{proof}

Please refer to Figure~\ref{f:largeface}, where we depict part of an auxiliary cycle $C$, that lies between two layers $L$ and $L'$,
where layer $L$ contains more than 6 faces. 
The faces $F_1$ and $F_2$ are next to each other
in layer $L$, and the vertices $D,E,F$ belong to a path in the auxiliary cycle $C$, such that $D$ and $E$ are of face $F_1$, $E$ and $F$ are of face $F_2$.
By considering vertices $I$ and $J$ adjacent respectively to $D$ and to $F$ in $C$, we find a path of five vertices $I,D,E,F,J$ in the cycle $C$, leading to a contradiction that $I,D,E,F,J$ must lie in a forbidden face in layer~$L'$.~\end{proof}


Observe in Figure~\ref{f:smallestD2D3} that $D_{2}$ is a degenerate case, since its central layer consists of the 12 pentagons. For larger radius $D_{r}$, r $\geq 3$, we
cannot have three pentagons next to each other in the central layer, as this will force in a layer next to the central layer two unbalanced hexagons next to each other partitioned in the same way, contradicting  the Large Forbidden Face lemma.


\begin{figure}[!hbt]
    \centering
    \includegraphics[scale=0.80]{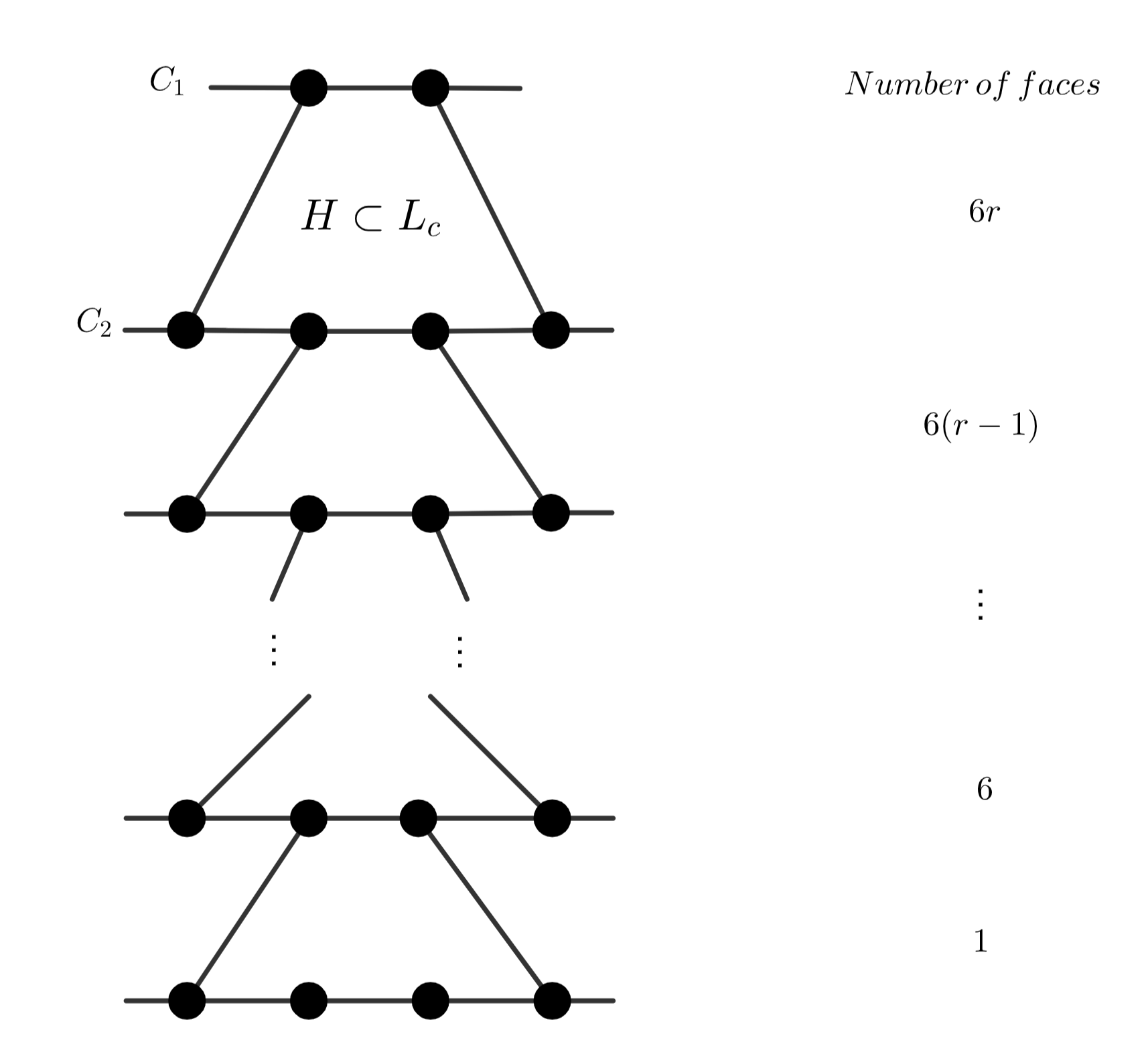}  
   \caption{Chain of unbalanced hexagons contained in a sequence of layers of $D_r$, $r \geq 3$ starting from the central layer $L_c$ with an unbalanced hexagon.}
    \label{f:lemmahexagonsbalanced}
\end{figure}
\FloatBarrier

\begin{lemma} [Balanced Hexagons]
\label{l:balanced}
    The $6r-12$ hexagons that belong to the central layer of a fullerene nanodisc $D_r$, $r \geq 3$, are balanced.
\end{lemma}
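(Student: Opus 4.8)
The plan is to argue by contradiction: assume the central layer $L_c$ of $D_r$, $r\geq 3$, contains an unbalanced hexagon $H$, and show that $H$ forces a chain of unbalanced hexagons propagating through successive layers toward one of the hexagonal covers, exactly as depicted in Figure~\ref{f:lemmahexagonsbalanced}, until we reach a configuration that cannot occur. The whole argument rests on a single local forcing step, which I would then iterate outward.

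First I would isolate that step. Let $H$ be unbalanced, with its long side of four vertices $w_1,w_2,w_3,w_4$ lying on one of the two auxiliary cycles $C_{12r-6}$ bounding $L_c$, say cycle $C$; then $H$ has only two vertices on the opposite cycle. The extreme vertices $w_1,w_4$ are the endpoints of the radial edges of $H$, so their third edges stay inside $L_c$, whereas the two interior vertices $w_2,w_3$ must send their unique remaining edge across $C$ into the adjacent layer $L'$. Since $w_2$ and $w_3$ are consecutive on $C$, the face of $L'$ bounded by the radial edges at $w_2$ and $w_3$ has exactly two vertices on $C$. Because $L'$ is a hexagonal layer, this face is a hexagon, and having only two vertices on $C$ it is unbalanced, with four vertices on the next auxiliary cycle, which is strictly smaller than $C$.

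Next I would iterate. Each newly produced unbalanced hexagon again has a four-vertex side lying on the smaller of the two cycles bounding its layer, so the same step applies verbatim and the four-vertex side migrates onto successively smaller cycles $C_{12r-6}\to C_{12r-18}\to\cdots\to C_{18}\to C_6$; all intermediate layers are hexagonal, so no pentagon can interrupt the chain. The contradiction then appears at the cover. Consider the last layer of the chain, bounded by $C_{18}$ and $C_6$. The cover is a single hexagonal face whose boundary is exactly $C_6$, so it uses no radial edge, and all six vertices of $C_6$ are endpoints of radial edges entering this last layer. Hence every arc of $C_6$ between consecutive radial endpoints has length one, so every face of this layer has exactly two vertices on $C_6$. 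But the chain requires here a hexagon with four vertices on $C_6$, which is impossible. This rules out the assumed unbalanced hexagon, so all $6r-12$ hexagons of $L_c$ are balanced; the symmetric case, in which the four-vertex side of $H$ lies on the other cycle $C$ and the chain runs toward the opposite cover, is identical.

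The main obstacle is to make the local forcing step airtight, that is, to certify that the chain cannot terminate prematurely in a balanced hexagon. Concretely, I must check that the two interior vertices of any four-vertex side always direct their radial edges away from the current layer, and that the face they create on the far side has exactly two vertices on the shared cycle, so that, the layer being hexagonal, it is necessarily unbalanced with its long side on the next smaller cycle. This is really the same impossibility that underlies Lemma~\ref{forbiddenfacelemma}, now pushed all the way out to $C_6$; once this one step is established rigorously, the outward iteration and the flat impossibility of four vertices on $C_6$ close the argument.
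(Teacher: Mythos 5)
Your proposal is correct and follows essentially the same route as the paper's proof: assume an unbalanced hexagon in the central layer, observe that its two interior degree-constrained vertices force an unbalanced hexagon in the adjacent hexagonal layer, and iterate this chain outward until the cover, where no face can have four vertices on the bounding $C_6$. Your write-up of the local forcing step and of the final contradiction at the cover is in fact somewhat more explicit than the paper's, but the underlying argument is identical.
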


\begin{proof}
   Please refer to Figure~\ref{f:lemmahexagonsbalanced} to follow the argument of the proof. Let $D_r$, $r \geq 3$ be a fullerene nanodisc, $L_c$ its central layer with $6r$ faces and $C_1$ and $C_2$ the auxiliary cycles that define $L_c$. Suppose there is at least one unbalanced hexagon $H$ in $L_c$, and suppose without loss of generality that $H$ has 4 vertices in $C_2$. Since $D_r$ is cubic, there are two vertices of $H$ that lie in $C_2$ that have neighbors in the auxiliary cycle that composes the hexagonal layer with size $6(r-1)$, that is, these two vertices are extremes of radial edges that form a hexagonal face in the next layer with $6(r-1)$ faces. Note by the structure of the graph, this construction always generates an unbalanced hexagon in the adjacent next layer, having two vertices of degree two in an auxiliary cycle, which must be extremes of radial edges connecting to the next hexagonal layer. This implies that in the inner (outer) layer there will be a hexagon with two vertices of degree two in the cubic graph, a contradiction.
\end{proof}

So the central layer contains zero unbalanced hexagons, and when we traverse from the central layer to the inner (outer) hexagonal layer, each next hexagonal layer contains precisely 6 additional unbalanced hexagons equally distributed among the balanced hexagons.

\begin{figure}[!hbt]
    \centering
    \includegraphics[scale=0.80]{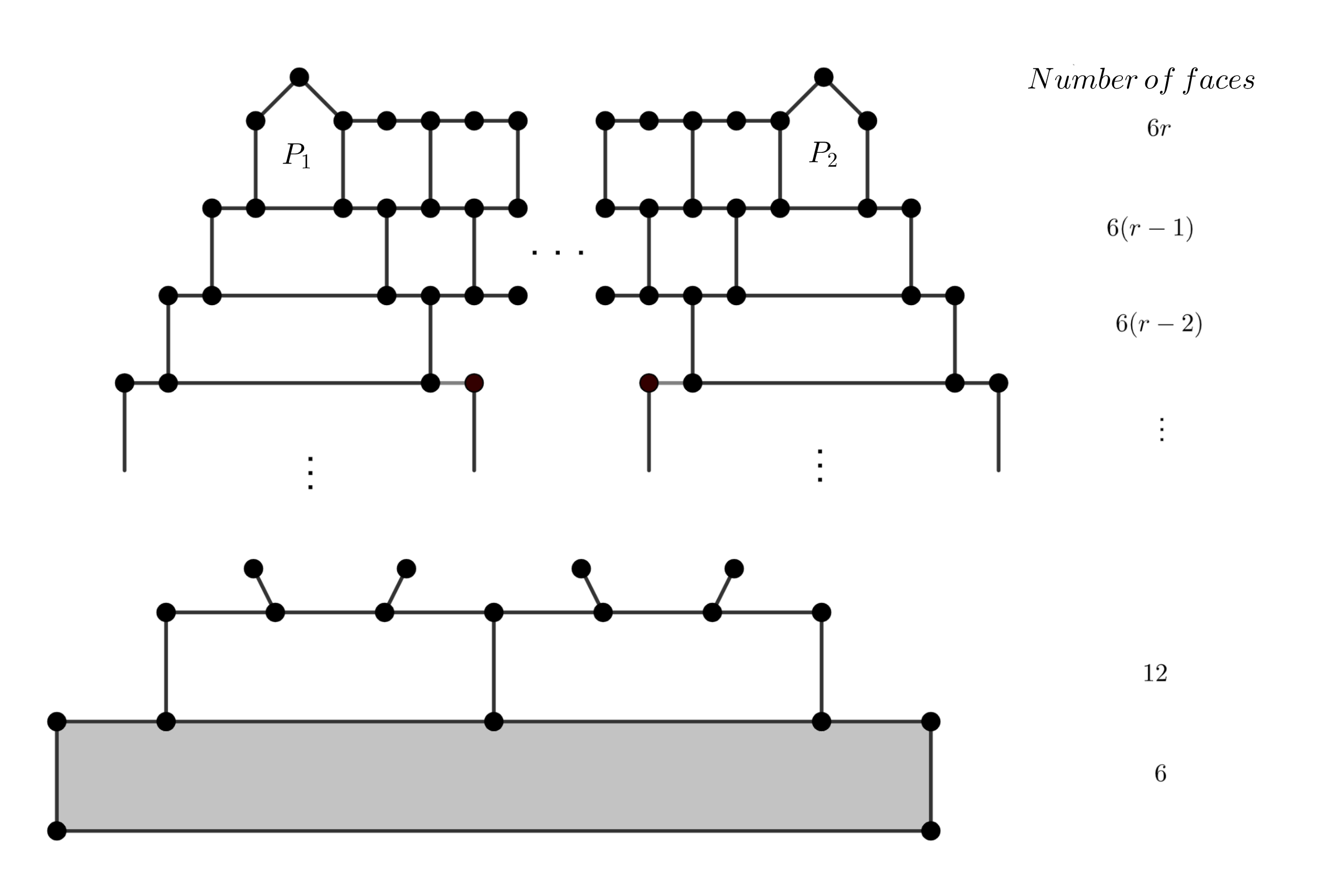}  
   \caption{Chain of unbalanced hexagons contained in consecutive layers of $D_r$, $r \geq 3$ starting from the central layer $L_c$ until we reach a Large Forbidden Face.}
    \label{f:nearestpentagonslemma}
\end{figure}
\FloatBarrier

\begin{lemma}[Alternating Pentagons]
\label{l:nearestpentagons}
In the central layer, each pair of consecutive pentagons are not partitioned in the same way.
\end{lemma}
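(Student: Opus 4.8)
The plan is to prove the sharper statement that the two kinds of pentagon are \emph{equally spaced} around the central layer, from which the alternation follows at once. I would start from the same mechanism used in the proof of Lemma~\ref{l:balanced}: if a face of some layer has exactly two vertices on one of the two cycles bounding that layer, then those two short-side vertices send their radial edges inward, so the face of the adjacent outer layer lying over them spans four vertices, i.e.\ it is an unbalanced hexagon whose short side again points outward. Iterating, each pentagon of $L_c$ spawns a chain of outward-pointing unbalanced hexagons, one per successive hexagonal layer (see Figure~\ref{f:nearestpentagonslemma}); a pentagon with two vertices on $C_1$ spawns its chain on the $C_1$ side, and one with two vertices on $C_2$ on the $C_2$ side.

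The naive attempt is to suppose two consecutive pentagons are partitioned the same way, say both with two vertices on $C_1$, and to let their two chains approach one another: a short bookkeeping shows the number of balanced hexagons between the two chains decreases by exactly one per layer, so eventually the two chains become next to each other and the Large Forbidden Face lemma is violated. The difficulty is that this collision is a contradiction only if it occurs in a layer with more than six faces; if the two consecutive pentagons are far apart the collision can be pushed out as far as the six-face cover layer, where adjacent unbalanced hexagons are the norm and no contradiction arises. To sidestep this gap-size case analysis I would instead route the whole argument through the \emph{minimal} hexagonal layer.

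Concretely, for $r\ge 3$ the layer with $12$ faces exists, and I would first certify that on the $C_1$ side it carries \emph{exactly six} unbalanced hexagons, all with short side on its outer cycle. The count comes from a vertex-advance tally along the two bounding cycles; crucially, any unbalanced hexagon there with short side pointing \emph{inward} would, by the same chain mechanism run toward the centre, force an unbalanced hexagon in $L_c$, contradicting Lemma~\ref{l:balanced}. With exactly six outward-pointing unbalanced hexagons in hand, the Large Forbidden Face lemma forbids any two of them from being next to each other; but six pairwise non-adjacent faces among a cyclic arrangement of twelve admit only the perfectly alternating placement, so these six chains are equally spaced with one balanced hexagon between consecutive chains. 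Propagating this inward, each inter-chain gap grows by exactly one per layer, and uniformly so, because between consecutive $C_1$-chains every intervening face is balanced. Hence the six pentagons with two vertices on $C_1$ are equally spaced around $L_c$ with period $r$, and symmetrically so are the six with two vertices on $C_2$.

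To finish, two disjoint families of six faces, each equally spaced with period $r$ in the cyclic list of $6r$ faces, must interleave, producing the pattern $C_1,C_2,C_1,C_2,\dots$ as the pentagons are met in order; in particular no two consecutive pentagons are partitioned in the same way. The step I expect to be the main obstacle is the certification in the third paragraph: pinning down that the twelve-face layer carries exactly six unbalanced hexagons and that all of them point outward, since it is precisely this count that triggers the maximum-independent-set rigidity forcing equal spacing. Once that is secured, the remainder is the Large Forbidden Face lemma together with the uniform gap bookkeeping, both of which mirror arguments already established for Lemma~\ref{l:balanced}.
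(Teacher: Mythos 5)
Your proposal is correct, and it takes a genuinely different route from the paper --- in fact your ``naive attempt'' paragraph \emph{is} essentially the paper's proof: the paper assumes two consecutive pentagons partitioned alike, runs the two chains of unbalanced hexagons toward the cover with one fewer balanced hexagon between them per layer, and applies Lemma~\ref{forbiddenfacelemma} at the collision, asserting without justification that the number $h$ of hexagons between the two pentagons satisfies $h \le r-2$, which is exactly what is needed for the collision to land in a layer with more than six faces. The gap you flag is therefore real (that bound ultimately rests on the equal spacing of the pentagons, which the paper later invokes only ``by construction'' in Section~\ref{s:block}), and rerouting through the twelve-face layer repairs it. Your certification step checks out: the vertex tally along the two bounding cycles of the layer with $6j$ faces gives $(\text{short side toward the cover}) - (\text{short side toward the centre}) = 6$, and a hexagon with short side toward the centre propagates layer by layer toward $L_c$ --- via the short-edge form of the chain mechanism, the face across its short edge being forced to take four consecutive vertices of the shared cycle --- until it forces an unbalanced hexagon of $L_c$ (it cannot terminate in a pentagon, which never has four vertices on one cycle), contradicting Lemma~\ref{l:balanced}. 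The maximum-independent-set rigidity in a cyclic arrangement of twelve faces, the uniform growth of each gap by exactly one per layer, and the interleaving of two period-$r$ families are all sound, so you obtain the stronger equal-spacing statement that Section~\ref{s:block} actually needs. Two small repairs: treat $D_2$ separately (your argument needs $r\ge 3$ for the twelve-face layer to be hexagonal, while for $r=2$ all twelve central faces are pentagons and alternation is immediate from Lemma~\ref{forbiddenfacelemma}), and state explicitly that the inward propagation uses the short-edge mechanism rather than the middle-two-vertices mechanism of Lemma~\ref{l:balanced}, which sends a chain in the opposite direction.
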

\begin{proof}
Let $D_r$, $r \geq$ 2 be a fullerene nanodisc, $L_c$ its central layer with $6r$ faces, $C_1$ and $C_2$ the auxiliary cycles that define $L_c$, and an arbitrary pair of consecutive pentagons $P_1$ and $P_2$ that are partitioned in the same way.
By Lemma~\ref{forbiddenfacelemma}, $P_1$ and $P_2$ are not next to each other faces of $L_c$. 

Please refer to Figure~\ref{f:nearestpentagonslemma} to follow the construction of the proof. Suppose there are $h$ hexagons between $P_1$ and $P_2$, and notice that $h \leq r-2$. We can assume with no loss of generality that $P_1$ and $P_2$ have each two vertices in the auxiliary cycle $C_2$. By construction, these four vertices are not adjacent by radial edges to vertices of the hexagonal layer with $6(r-1)$ faces.
As a consequence, these vertices belong to unbalanced hexagons $H_1$ and $H_2$ in the hexagonal layer with $h-1$ balanced hexagons between $H_1$ and $H_2$. At each next hexagonal layer in the sequence of layers towards the inner (outer) layer, we have two unbalanced hexagons with one less balanced hexagons between them. Since $h \leq r-2$ and from the central layer to the inner (outer) layer there are $r$ layers, we obtain a Large Forbidden Face and a contradiction.
\end{proof}







\section{The central layer is always Type 1}
\label{s:central}


We prove that the natural strategy to use one color for all the radial edges and to use by Theorem~\ref{3coloringcn} the other 3 colors in each auxiliary cycle can be used to obtain a 4-total coloring of the central layer of an arbitrary nanodisc.


\begin{figure}[ht]
\centering
\includegraphics[scale=0.47]{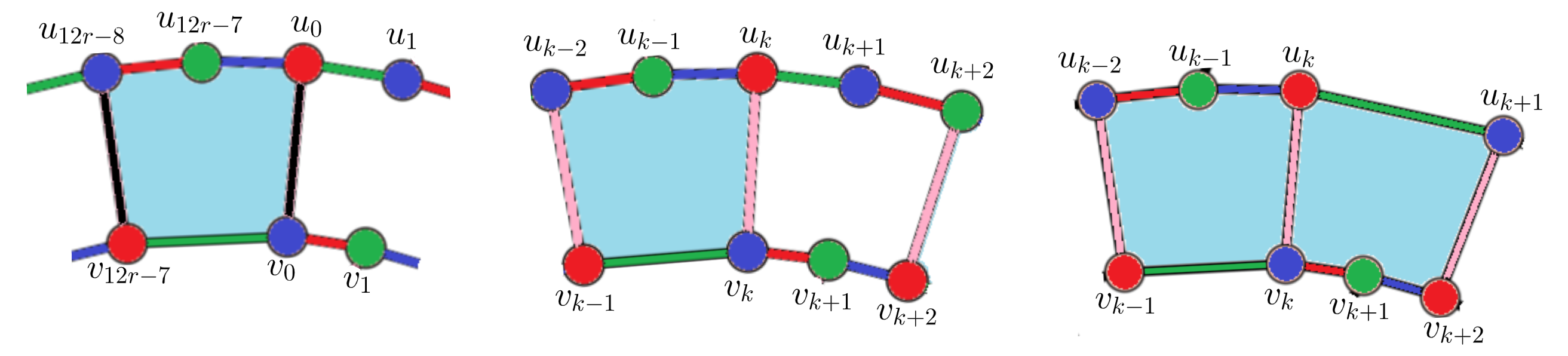}  
\caption{(a) Choice of $u_0$ and $v_0$ in $C_{12r-6}$ and $C_{12r-6}^{*}$, respectively; (b) Case 1; (c) Case 2.}
\label{f:lemageral}
\end{figure}
\FloatBarrier

\begin{lemma}\label{l:centrallayer4totalc}
The central layer of every fullerene nanodisc $D_r$, $r \geq 2$, is $4$-total colorable.
\end{lemma}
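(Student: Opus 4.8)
The plan is to carry out the natural strategy and then isolate the single genuinely constraining condition it imposes. Seen in isolation, the central layer is the subgraph formed by the two auxiliary cycles $C_1,C_2$, each isomorphic to $C_{12r-6}$, together with the $6r$ radial edges, which form a matching; its maximum degree is $3$, so a $4$-total coloring is what we seek. First I would color every radial edge with a fourth color, say $4$. Since the radial edges are pairwise non-adjacent and every cycle element will receive a color in $\{1,2,3\}$, the radial edges never conflict with one another, with their incident vertices, or with the incident cycle edges. Because $12r-6\equiv 0 \pmod 3$, Theorem~\ref{3coloringcn} supplies a $3$-total coloring of each of $C_1$ and $C_2$ with colors $\{1,2,3\}$, in the periodic pattern where consecutive vertices receive $1,2,3,1,2,3,\dots$. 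The only adjacencies not yet verified are those across a radial edge $u_jv_j$: since $u_j$ and $v_j$ are adjacent, they must receive distinct vertex colors. Everything therefore reduces to matching up the two periodic colorings so that no radial edge joins two equally colored vertices.

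To control this, I would fix a clockwise traversal and starting vertices $u_0\in C_1$ and $v_0\in C_2$ as in Figure~\ref{f:lemageral}(a), so that the color of the $j$-th radial endpoint equals its starting color plus its cyclic position modulo $3$. Writing $a_j$ and $b_j$ for the numbers of internal (non-radial) vertices that the $j$-th face contributes to $C_1$ and to $C_2$, the color of $u_j$ minus the color of $v_j$ equals $(s_1-s_2)+\sum_{i<j}(a_i-b_i) \pmod 3$, where $s_1,s_2$ are the colors of $u_0,v_0$. A balanced hexagon has $a_i=b_i=1$ and contributes $0$ to this sum, whereas a pentagon contributes $+1$ or $-1$ according to which auxiliary cycle holds its two vertices. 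By Lemma~\ref{l:balanced} every hexagon of the central layer is balanced, and by Lemma~\ref{l:nearestpentagons} the pentagons alternate in type as we go around; hence the running sum $\sum_{i<j}(a_i-b_i)$ increases and decreases by one in strict alternation and takes only two consecutive values, that is, two distinct residues modulo $3$.

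It then remains to select the offset $s_1-s_2$. The two attained residues of the running sum forbid exactly two of the three possible values of $s_1-s_2 \pmod 3$, so a single admissible value survives; choosing $u_0,v_0$ (equivalently, rotating one cycle's coloring) to realize it makes $\mathrm{color}(u_j)\neq \mathrm{color}(v_j)$ simultaneously for every radial edge, completing a valid $4$-total coloring; the two alternatives in Figure~\ref{f:lemageral}(b,c) correspond to the two residues that must be avoided. The degenerate disc $D_2$, whose central layer consists only of the twelve alternating pentagons and contains no hexagon, is covered by the same computation. I expect the main obstacle to be exactly the second step: showing that the endpoint-color discrepancy across the radial edges never wanders over all three residues. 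This is precisely where the structural lemmas on balanced hexagons and on alternating pentagons are indispensable, since an unbalanced hexagon or a repeated pentagon pattern would shift the running sum by an amount that could hit every residue and thereby destroy the global compatibility.
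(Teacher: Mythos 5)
Your proposal is correct and follows the paper's overall strategy — color the radial matching with the fourth color, put Yap's periodic $3$-total colorings on the two cycles $C_{12r-6}$, and invoke the Balanced Hexagons and Alternating Pentagons lemmas to align the two colorings so that no radial edge is monochromatic — but the verification step is carried out differently. The paper fixes the alignment up front (choosing $u_0v_0$ as the second radial edge of a pentagon, with $c(u_0)=1$, $c(v_0)=3$) and then checks by a local case analysis that every radial edge is of type $u_kv_k$ or $u_kv_{k+1}$ and that neither type creates a vertex conflict, distinguishing whether a pentagon is followed by a balanced hexagon or by a pentagon partitioned the other way. You instead introduce the endpoint-color discrepancy $c(u_j)-c(v_j)\equiv(s_1-s_2)+\sum_{i<j}(a_i-b_i)\pmod 3$ and observe that balanced hexagons contribute $0$ while alternating pentagons contribute $\pm1$ in strict alternation, so the running sum occupies only two residues and exactly one choice of offset $s_1-s_2$ avoids a conflict everywhere. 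Your bookkeeping is more compact and global, makes transparent exactly why the two structural lemmas are the indispensable ingredients, and yields the additional information that the admissible offset is forced (which foreshadows the paper's later observation that the natural strategy cannot be extended beyond the central layer); the paper's case analysis, on the other hand, records the explicit local patterns ($u_kv_k$ versus $u_kv_{k+1}$) that are reused in the block colorings of Sections~\ref{s:block} and~\ref{s:Type1}.
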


\begin{proof}
    Let $D_r$ be a nanodisc, $r \geq 2$, and $C_{12r-6}$ and $C_{12r-6}^{*}$ the two cycles that define its central layer. By Theorem~\ref{3coloringcn}, there is a $3$-total coloring for each cycle of size $12r-6$. Actually, the $3$-total coloring colors the elements of the cycle in circular order~\cite{yap1996}. We label the vertices of each cycle as follows.
    $$
V(C_{12r-6}) = \{u_{0},u_{1},...,u_{12r-7}\}, V(C_{12r-6}^{*}) = \{v_{0},v_{1},...,v_{12r-7}\}.
    $$

    Choose $u_0$ and $v_0$ such that these vertices are extremes of a pentagonal face, so that the edge $u_{0}v_{0}$ is the second radial edge of this pentagon in clockwise,
    as illustrated in Figure~\ref{f:lemageral}(a). We define the  3-total colorings for $C_{12r-6}$ by choosing color $1$ (red) for $u_0$ and for $C_{12r-6}^{*}$ by choosing color $3$ (blue) for~$v_0$. 
    Thus, the color assignment in $C_{12r-6}$ is such that for the vertices

$$
c(u_{k}) = \begin{cases} 1, & \mbox{if } k \equiv 0\mod3; \\ 3, & \mbox{if } k \equiv 1\mod3;\\ 2, & \mbox{if } k \equiv 2\mod3;  \end{cases}
$$
$$
$$ 
and for $C_{12r-6}^{*}$ is such that for the vertices

$$
c(v_{k}) = \begin{cases}  3, & \mbox{if } k \equiv 0\mod3;\\ 2, & \mbox{if } k \equiv 1\mod3; \\ 1, & \mbox{if } k \equiv 2\mod3;  \end{cases}
$$
$$
$$ 

and the color assignment in $C_{12r-6}$ and in $C_{12r-6}^{*}$ is such that for the edges
$$
c(u_{k}u_{k+1}) = c(v_{k+1}), \ \ \forall u_{k}u_{k+1} \in E(C_{12r-6}), 
$$
$$
c(v_{k}v_{k+1}) = c(u_k), \ \ \forall v_{k}v_{k+1} \in E(C_{12r-6}^{*}).
$$

Note that by Section~\ref{ss:planar}, the pentagonal and hexagonal structure in the central layer is well defined. 
There are at most two consecutive pentagons, with balanced hexagons between the  pentagons, and recall that the radial edges that connect the auxiliary cycles form a matching. Then, by choice of $u_{0}v_{0}$, counting of vertices and the structure of the faces in a central layer of $D_r$, we conclude that all radial edges that are a second edge in clockwise of a pentagon with 3 vertices in $C_{12r-6}$ are of type $u_{k}v_{k}$, and by symmetry, for pentagons with 3 vertices in $C_{12r-6}^{*}$, the edges of type $u_{k}v_{k}$ are the first in clockwise. 

Furthermore, by the structure of the pentagon, the first edge in clockwise of a pentagon is of type $u_kv_{k+1}$ (with 3 vertices in $C_{12r-6}$, and by symmetry, the second in clockwise when the pentagon has 3 vertices in $C_{12r-6}^{*}$), and the radial edges that belong to balanced hexagons can be of type $u_{k}v_{k}$ or $u_{k}v_{k+1}$, depending on the structure of the previous face. 
By the choice of $u_0$ and $v_0$ and the structure of the faces of the central layer, 
we show next that the radial edges occur in two ways: $u_{k}v_{k}$ or $u_{k}v_{k+1}$, 
and that we have no color conflict between the extremes of radial edges.
there are two cases to consider, up to symmetry:

\begin{itemize}

\item \textbf{Case 1:} There is a balanced hexagon next to the pentagon. Please refer to Figure~\ref{f:lemageral}(b). By the choice of the edge $u_{0}v_{0}$, the consecutive faces share an edge $u_{k}v_{k}$. As the hexagon is balanced, by counting vertices, the other edge of the hexagon will also be of the type $u_{k}v_{k}$. Note that by the coloring structure, vertices $u_{k} \in V(C_{12r-6})$ and $v_{k} \in V(C_{12r-6}^{*})$ are colored with different classes of colors. Thus, $c(u_{k}) \neq c(v_{k})$ and these edges do not imply in a color conflict. Note that the pentagon also has an edge of type $u_{k}v_{k+1}$. By the coloring structure given, note that $c(u_k) = c(v_{k}v_{k+1})$. This implies that $u_k$ and $v_{k+1}$ cannot belong to the same color class. Therefore, $c(u_{k}) \neq c(v_{k+1})$, and these type of edges do not imply in a color conflict.

 \item \textbf{Case 2:} There is a pentagon partitioned differently next to the pentagon. Please refer to Figure~\ref{f:lemageral}(c). 
 By the choice of the edge $u_{0}v_{0}$, the consecutive pentagonal faces share an edge $u_{k}v_{k}$. Note that in this case, the radial edges of the pentagon partitioned differently are $u_{k}v_{k}$ and $u_{k}v_{k+1}$. Thus, as seen by Case 1, these edges do not have colored extremes belonging to the same color class, and therefore we have no color conflict between the vertices of $C_{12r-6}$ and $C_{12r-6}^{*}$ that are extremes of radial edges.
\end{itemize}
As none of the radial edges implies in a color conflict, 
 we may give to all radial edges the same color $4$ (pink), and thus we obtain a $4$-total coloring of the central layer for $D_r$,~$r \geq 2$.   
\end{proof}

\section{The natural strategy does not extend}
\label{s:extend}

First, let us argue that for $D_2$ the natural strategy to use one color for all the radial edges and the other 3 colors for the vertices and edges of the auxiliary cycles is not possible to achieve a 4-total coloring.
Assume that the central layer is colored according to Section~\ref{s:central}.
So the 6 vertices of the central layer that are extremes of the radial edges of the hexagonal layer all receive the same color, say color $1$, among the 3 colors used in the auxiliary cycle of the central layer.
Since color $4$ must be given to all the radial edges of $D_2$, only two colors remain for the vertices of the inner (outer) $C_6$, and all edges of the inner (outer) $C_6$ are forced to receive the same color $1$.

Actually, we can establish a more general property, that says that for any $D_r$ the natural strategy to use one color for all the radial edges and the other 3 colors for the vertices and edges of the auxiliary cycles is not possible to achieve a 4-total coloring.

\begin{lemma}\label{l:notextend}
There is no 4-total coloring of $D_r$ where all radial edges receive the same color.
\end{lemma}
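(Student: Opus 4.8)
The plan is to suppose, for contradiction, that $D_r$ admits a $4$-total coloring in which all radial edges receive a single color, say color $4$ (pink), and to locate an obstruction already in the innermost cap. First I would record the consequences of the matching structure: since the radial edges form a perfect matching, every vertex is the endpoint of exactly one color-$4$ edge, so neither any vertex nor any non-radial edge can use color $4$. Consequently each auxiliary cycle $C_{12i-6}$ is colored with only the three colors $\{1,2,3\}$, that is, it receives a $3$-total coloring; such a coloring exists by Theorem~\ref{3coloringcn} because every auxiliary cycle has length divisible by $3$.

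Next I would exploit the rigidity of $3$-total colorings of cycles. A $3$-total coloring of $C_{3k}$ is essentially forced: once a vertex and one incident edge are fixed, the period-$3$ pattern propagates (this is the circular order of Yap's coloring), so the vertex colors repeat with period $3$. Two facts follow that drive the argument: every auxiliary cycle realizes all three colors on its vertices, and two vertices whose cyclic distance is a multiple of $3$ must receive the same color.

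I would then concentrate on the inner hexagonal cover, bounded by $C_6$, together with the first layer joining $C_6$ to $C_{18}$. A short count --- each of the six edges of the cap borders exactly one first-layer hexagon, giving a bijection --- shows that every first-layer hexagon is unbalanced, carrying exactly two adjacent vertices of $C_6$ and four consecutive vertices of $C_{18}$; hence its two radial edges land three positions apart on $C_{18}$. Going once around, the six radial edges issuing from $C_6$ therefore land on vertices of $C_{18}$ whose indices are all congruent modulo $3$, and by the rigidity above these six landing vertices share a single color $\alpha$. But the six vertices of $C_6$ realize all three colors, so some vertex $u$ of $C_6$ has color $\alpha$ equal to the color of its radial neighbour in $C_{18}$; the color-$4$ radial edge then joins two vertices of color $\alpha$, contradicting properness. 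Since the cap and first layer are identical for every $r \geq 2$, this rules out such a coloring for all $D_r$, generalizing the explicit $D_2$ computation.

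The main obstacle is the geometric input rather than the coloring bookkeeping: the crux is to establish cleanly that the first-layer hexagons are unbalanced with four consecutive vertices on $C_{18}$, so that the radial edges are aligned modulo $3$. This is exactly the type of structural statement controlled by the circular embedding and the Large Forbidden Face Lemma~\ref{forbiddenfacelemma}, and once the mod-$3$ alignment is in hand the contradiction is immediate from the forced period-$3$ colorings of $C_6$ and $C_{18}$.
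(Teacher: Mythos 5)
Your proof is correct and follows essentially the same route as the paper: both arguments localize the obstruction in the inner (outer) layer bounded by $C_6$ and $C_{18}$, and both hinge on the fact that the six radial-edge endpoints on $C_{18}$ must all receive the same color, forcing a conflict at $C_6$. You actually justify that key fact (via the mod-$3$ alignment of the attachment points and the rigidity of $3$-total colorings of $C_{3k}$) more explicitly than the paper does, and your final contradiction (some $C_6$ vertex must repeat the color $\alpha$) is an equivalent variant of the paper's (all $C_6$ edges forced to one color).
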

\begin{proof}
It suffices to prove that we cannot 4-total color the inner (outer) layer using one color $4$ for all the radial edges the other 3 colors for the vertices and edges of the auxiliary cycles.
The inner (outer) layer is defined by two cycles: a $C_6$ and a $C_{18}$.
Any such 4-total coloring gives the same color, say $c_1$, to the six vertices of the $C_{18}$ that are extremes of the radial edges of the inner (outer) layer, and the vertices of the $C_6$ must be colored with the remaining two colors $2$ and $3$. Therefore all edges of the inner (outer) $C_6$ are forced to receive the same color $1$.
\end{proof}

Lemma~\ref{l:notextend} implies that every 4-total coloring of the whole $D_2$ has to use all four colors in the two auxiliary cycles that compose the central layer.
We give in Figure~\ref{f:twototalD2} two  4-total colorings of $D_2$,
and discuss in the sequel how to define for larger radius a 4-total coloring that extends both a suitable 4-total coloring of $D_2$ and the 4-total coloring of the central layer obtained in Section~\ref{s:central}. 
Observe that the 4-total coloring of $D_2$ depicted on the right has all radial edges of the central layer colored with the same color and has a circular symmetry, once we traverse one third of the elements of the graph, the same pattern is repeated twice.

Next, in Section~\ref{s:block}, we define, for an arbitrary nanodisc, a nanodisc block that contains one third of the elements of the graph, and a suitable decomposition of the whole nanodisc.
In Section~\ref{s:Type1}, we are able to use such a decomposition to describe a 4-total coloring of the whole nanodisc, for an infinite family of nanodiscs with suitable values of $r$. 

\begin{figure}[!hbt]
    \centering
    \includegraphics[scale=0.59]{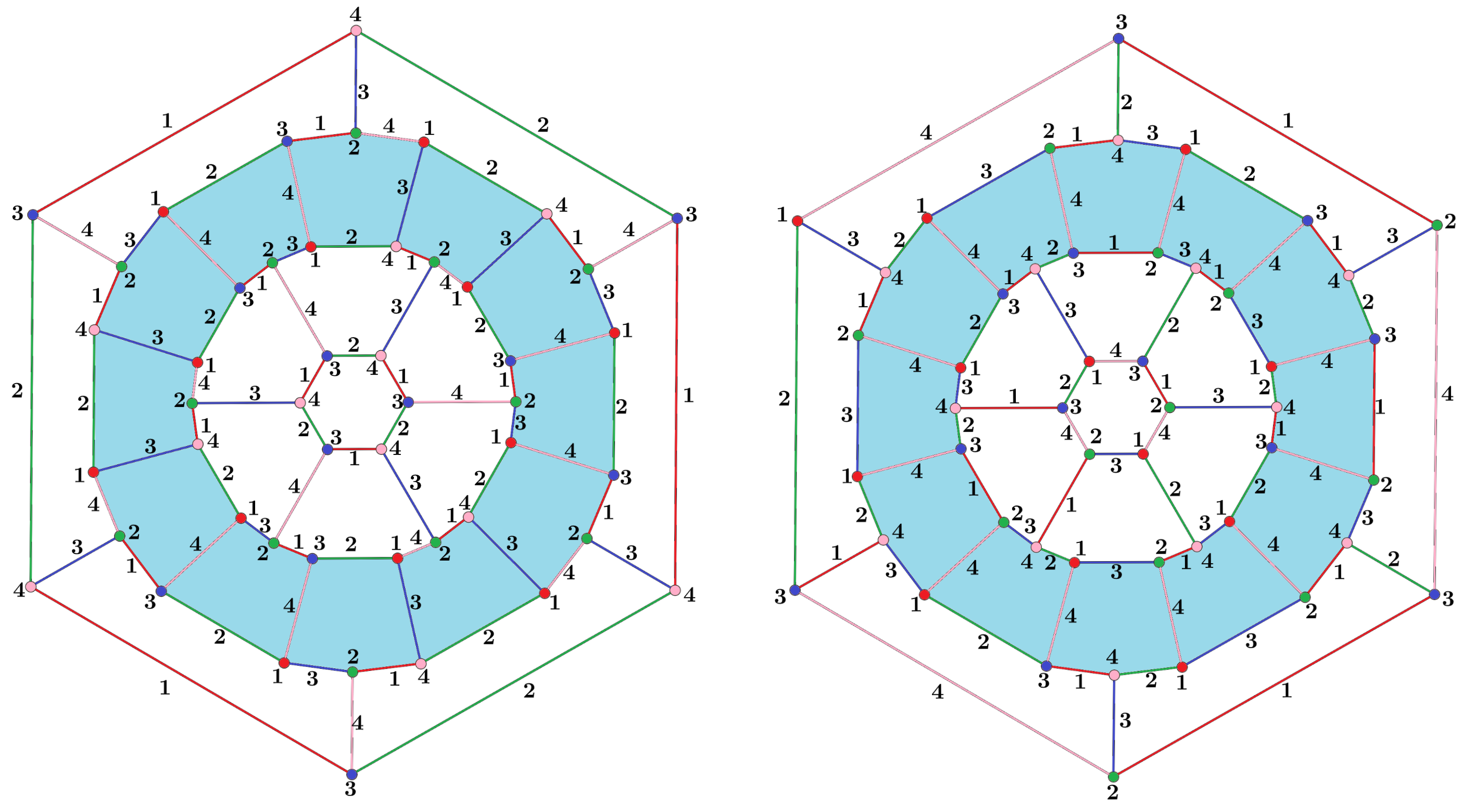}  
   \caption{Two 4-total colorings of $D_2$.}
    \label{f:twototalD2}
\end{figure}
\FloatBarrier

\section{Construction of nanodisc blocks}
\label{s:block}

The symmetrical construction of the fullerene nanodisc in relation to the central layer allows us to establish a parallel with cartography and divide the structure into two hemispheres as follows:

\begin{definition}[Outer and inner hemispheres]
Let $D_r, r \geq 2$, be a fullerene nanodisc and its auxiliary cycles sequence from the outer to the inner layer 
$
\{C_{6}, C_{18},\ldots , C_{12r-6}, C_{12r-6}, \ldots , C_{18}, C_{6}\}.
$
The \emph{outer hemisphere of $D_r$}, denoted by $O_{H}(D_r)$, is the cubic semigraph induced by vertices of the decreasing sequence of cycles from the outer layer to the central layer
$
\{C_{6},C_{18},\ldots C_{12r-6}\}.
$
Similarly, the \emph{inner hemisphere of $D_r$}, denoted by $I_{H}(D_r)$, is the cubic semigraph induced by vertices of the increasing sequence of auxiliary cycles from the central layer to the inner layer
$
\{C_{12r-6},\ldots , C_{18},C_{6}\}.
$

Each of the $6r$ radial edges $uv$ of the central layer  defines a semiedge of each hemisphere, $u{\cdot}$ for $O_{H}(D_r)$, and $v{\cdot}$ for $I_{H}(D_r)$. Note that $O_{H}$ and $I_{H}$  are isomorphic semigraphs. Figure~\ref{f:D3hemisphere} illustrates a nanodisc and its respective hemispheres highlighted.
\end{definition}

\begin{figure}[!hbt]
    \centering
    \includegraphics[scale=0.95]{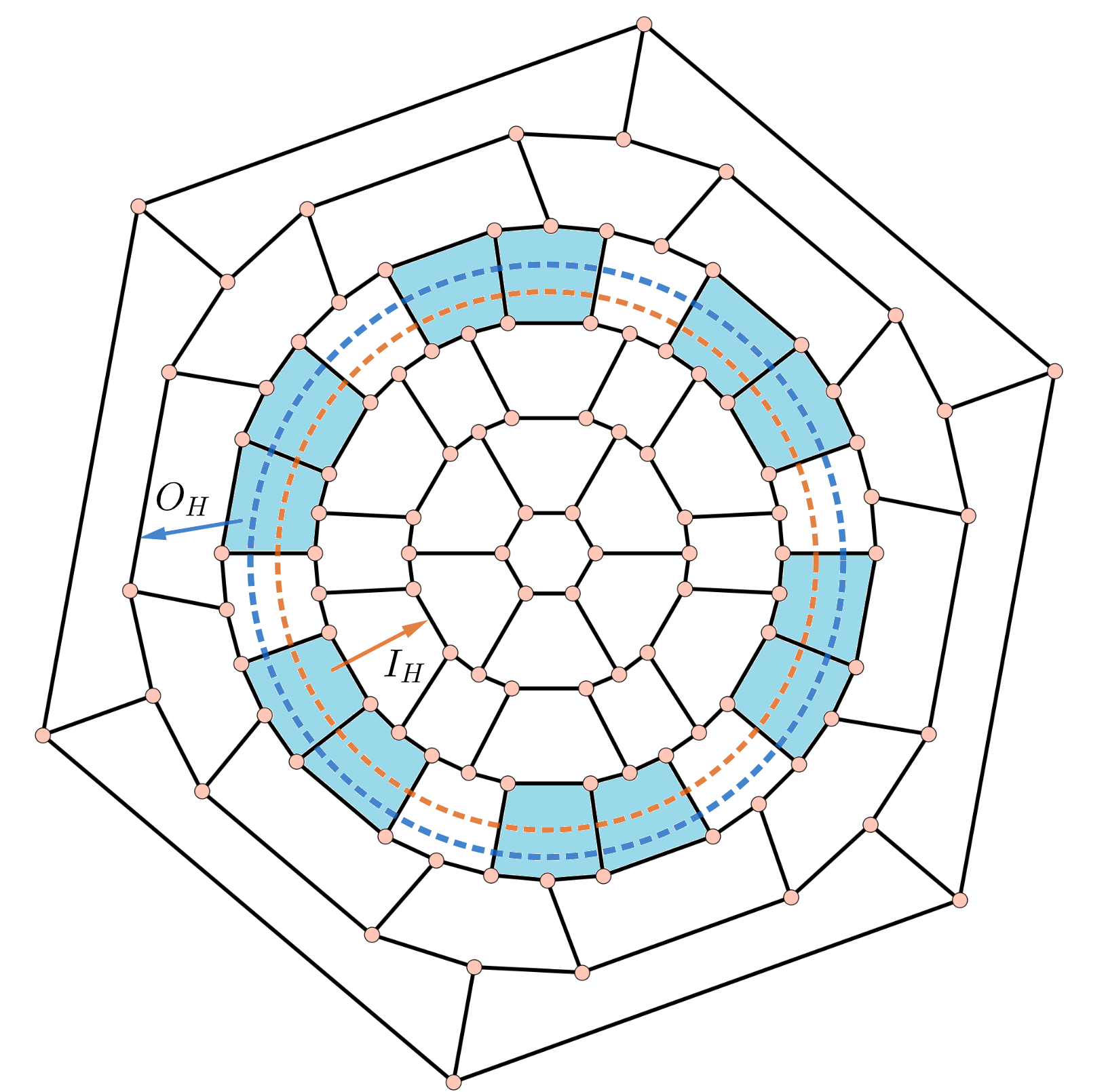}  
   \caption{The inner hemisphere $I_{H}(D_3)$ bounded by the border dotted in orange; The outer hemisphere $O_{H}(D_3)$ bounded by the border dotted in blue.}
    \label{f:D3hemisphere}
\end{figure}
\FloatBarrier

For hemisphere $O_{H}$, we label the cycles as follows:

$$
C^1 := C_{12r-6},
C^2 := C_{12r-18},
\ldots ,
C^{r-1} := C_{18},
C^r := C_6.
$$

To define a nanodisc block, it is necessary to label specific vertices, particularly those that form the frontier of the block. We will label the $12$ consecutive pentagons in the central layer $P_{i}, P_{i}', 1 \leq i \leq 6$. Observe that $P_{i}, 1 \leq i \leq 6$ are partitioned in the same way and we can assume that each of them has two vertices in $C^{1} \subset O_{H}(D_r)$. Similary for $P_{i}', 1 \leq i \leq 6$ in relation to $I_{H}(D_r)$. By construction of $D_r$, there are $r-1$ faces between $P_{i}$ and $P_{i+1}$, one of which is $P_{i}'$, a consecutive pentagon of $P_{i}$ and the remaining are balanced hexagons. For convenience, the figures in this section will depict the pentagons $P_{i}$, $P_{i}'$ as next to each other, although as we have seen in Section~\ref{s:fullerene} they are not required to be next to each other.

We ask the reader to follow Figure~\ref{f:ConstructionPaths} for a better understanding of the construction that will be detailed below. Consider a pair of pentagons $P_i,P_{i+1}, i \in \{1,3,5\}$ in clockwise, and their respective vertices $x_{P_i}, y_{P_i}, x_{P_{i+1}},y_{P_{i+1}} \in C^{1}$. Denote by $u_{L}^{1}$ the vertex $w \in C^{1}$ such that $dist(y_{P_{i}},w) = r$ in anticlockwise, and denote by $u_{R}^{1}$ the vertex $z \in C^{1}$ such that $dist(x_{P_{i+1}},z)=r$ in clockwise.

\begin{figure}[!hbt]
    \centering
    \includegraphics[scale=0.9]{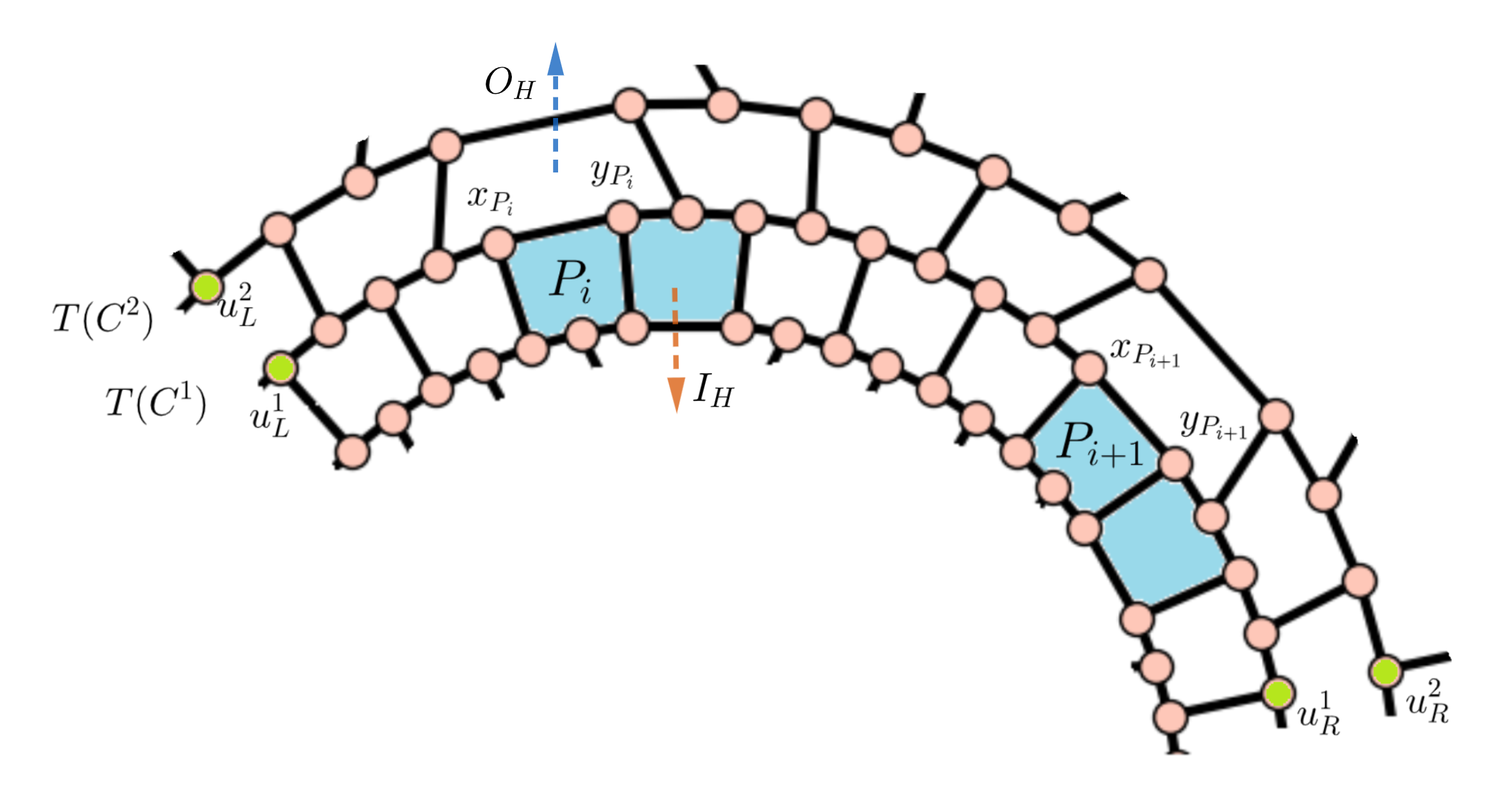}  
   \caption{Construction of $T(C^{1})$ and $T(C^{2})$ paths on the outer hemisphere of fullerene nanodisc $D_5$. The vertices $u_{L}^{1}$, $u_{L}^{2}$, $u_{R}^{1}$, and $u_{R}^{2}$ representing the start and end of the paths are highlighted in lime green. Note that $dist(u_{L}^{1},u_{R}^{1}) = 18 = \frac{1}{3}(|C^{1}|)$ and $dist(u_{L}^{2},u_{R}^{2}) = 14 = \frac{1}{3}(|C^{2}|)$. }
    \label{f:ConstructionPaths}
\end{figure}
\FloatBarrier

Consider the path $T(C^{1})$ in $C^{1}$ from $u_{L}^{1}$ to $u_{R}^{1}$. The following result provides us with the exact length of $T(C^{1})$ in a nanodisc.

\begin{lemma}
\label{l:distancevRvL}
Let $D_r$, $r \geq 2$, be a fullerene nanodisc and the subgraph $T(C^{1})$ as constructed above. The distance between $u_{L}^{1}$ and $u_{R}^{1}$, $d(u_{L}^{1},u_{R}^{1})$ is $\frac{1}{3} \vert C_{12r-6}\vert$.
\end{lemma}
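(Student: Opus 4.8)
The plan is to prove the identity by a direct arc-counting argument along the cycle $C^{1}=C_{12r-6}$, exploiting the fact that each central-layer face meets $C^{1}$ in a contiguous arc whose length (its number of $C^{1}$-edges) equals one less than the number of its vertices lying on $C^{1}$. Since consecutive faces around the central layer share exactly one vertex of $C^{1}$ (the endpoint of the common radial edge), these arc lengths concatenate along the cycle with no double counting, and their sum over all $6r$ faces recovers $|C_{12r-6}|=12r-6$. I will then add up the relevant arcs and compare with $\tfrac13|C_{12r-6}|=4r-2$.

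First I would record the arc length of each face type occurring in the block. A pentagon $P_{i}$ is partitioned with two vertices on $C^{1}$, so $x_{P_{i}}$ and $y_{P_{i}}$ are adjacent on $C^{1}$ and $P_{i}$ spans a single $C^{1}$-edge. Its consecutive pentagon $P_{i}'$ is partitioned the opposite way, hence has three vertices on $C^{1}$ and spans two $C^{1}$-edges; and by Lemma~\ref{l:balanced} every hexagon of the central layer is balanced, so it too has three vertices on $C^{1}$ and spans two $C^{1}$-edges. Thus every face strictly between $P_{i}$ and $P_{i+1}$ contributes exactly two $C^{1}$-edges.

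Next I would count the clockwise arc from $y_{P_{i}}$ to $x_{P_{i+1}}$. By construction there are exactly $r-1$ faces strictly between $P_{i}$ and $P_{i+1}$ — namely one copy of $P_{i}'$ together with $r-2$ balanced hexagons — and each spans two $C^{1}$-edges, so $d(y_{P_{i}},x_{P_{i+1}})=2(r-1)=2r-2$ clockwise. Adding the two length-$r$ extensions that define the endpoints, namely the $r$ edges from $u_{L}^{1}$ to $y_{P_{i}}$ (anticlockwise) and the $r$ edges from $x_{P_{i+1}}$ to $u_{R}^{1}$ (clockwise), gives $d(u_{L}^{1},u_{R}^{1})=r+(2r-2)+r=4r-2=\tfrac13(12r-6)=\tfrac13|C_{12r-6}|$; since $4r-2<\tfrac12|C_{12r-6}|$, this clockwise count is genuinely the graph distance.

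The argument is essentially bookkeeping, so the only delicate point I expect is justifying that the arc lengths concatenate without overlap: I must use that adjacent faces share precisely one $C^{1}$-vertex, so that the trailing vertex of one face is the leading vertex of the next and $y_{P_{i}},x_{P_{i+1}}$ are exactly the vertices bounding the gap, while invoking Lemma~\ref{l:balanced} to guarantee that every intervening hexagon is balanced (and hence contributes arc length $2$ rather than being unbalanced). As a consistency check, taking the three pairs $\{P_{1},P_{2}\},\{P_{3},P_{4}\},\{P_{5},P_{6}\}$ produces three arcs each of length $4r-2$ which tile $C^{1}$ exactly, since $3(4r-2)=12r-6$; this both confirms the count and explains the factor $\tfrac13$.
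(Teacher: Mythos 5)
Your proposal is correct and follows essentially the same route as the paper: the paper's proof is precisely the decomposition $d(u_{L}^1,u_{R}^1)=d(u_{L}^1,y_{P_{i}})+d(y_{P_{i}},x_{P_{i+1}})+d(x_{P_{i+1}},u_{R}^1)=r+2(r-1)+r=4r-2$, which you reproduce. Your additional bookkeeping --- justifying the middle term $2(r-1)$ via the arc lengths of the $r-1$ intervening faces (one oppositely partitioned pentagon and $r-2$ balanced hexagons, each spanning two $C^{1}$-edges) and the tiling check $3(4r-2)=12r-6$ --- is a sound elaboration of what the paper leaves implicit under ``by construction.''
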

\vspace{-0.1cm}
\begin{proof}

By construction of $T(C^{1})$, $dist(u_{L}^1,u_{R}^1) = dist(u_{L}^1,y_{P_{i}}) + dist(y_{P_{i}}, x_{P_{i+1}}) +  dist(x_{P_{2}},u_{R}^1) = r + 2(r-1) + r = 4r - 2 = \frac{1}{3}({12r-6})$.
\end{proof}

We can construct the path $T(C^{2})$ from $T(C^{1})$ as follows. Choose $u_{L}^{2},u_{R}^{2} \in C^{2}$ such that $dist(u_{L}^{2},u_{R}^{2})= 4r - 6 =\frac{1}{3} |C^{2}|$ and such that the path from $u_{L}^{2}$ to $u_{R}^{2}$ contains all the vertices in $C^{2}$ that have radial edges with extremes in $T(C^{1})$. Figure~\ref{f:ConstructionPaths} exhibits the construction of $T(C^{2})$ from $T(C^{1})$. More generally, we can construct the paths $T(C^{t})$, $t \in \{2,3,\ldots ,r\}$ from $T(C^{t-1})$ constructed previously. We choose $u_{L}^{t},u_{R}^{t}  \in C^{t}$ such that $dist(u_{L}^{t},u_{R}^{t})=\frac{1}{3}|C^{t}|$ and such that the path from $u_{L}^{t}$ to $u_{R}^{t}$ contains all the vertices in $C^{t}$ that have radial edges with extremes in $T(C^{t-1})$.

A \emph{nanodisc block}, denoted by $B_{i,i+1}(D_r)\subset O_{H}(D_r)$, or simply $B_{i,i+1}$, $i \in \{1,3,5\}$, 
is the subcubic semigraph induced by vertices of $T(C^{t}), \forall t \in \{1,2,\ldots ,r\}$. The semiedges of $B_{i,i+1}(D_r)$  are given by $u{\cdot} \in C^{1}$. 
Since the hemispheres are isomorphic, the construction described above is analogous for $I_H(D_r)$ and such block is denoted by $B{'}_{i,i+1}(D_r)$.
Figure~\ref{f:BD5block} displays examples of nanodisc blocks. Note that the arrangement of radial edges between the vertices $u_{L}^{t}, u_{R}^{t}, t \in \{1,\ldots ,r\}$ depends on the parity of the radius $r$, as well as the existence of semi-edges $u_{L}^{1}{\cdot}, u_{R}^{1}{\cdot}$. 

\begin{figure}[!hbt]
    \centering
    \includegraphics[scale=1.07]{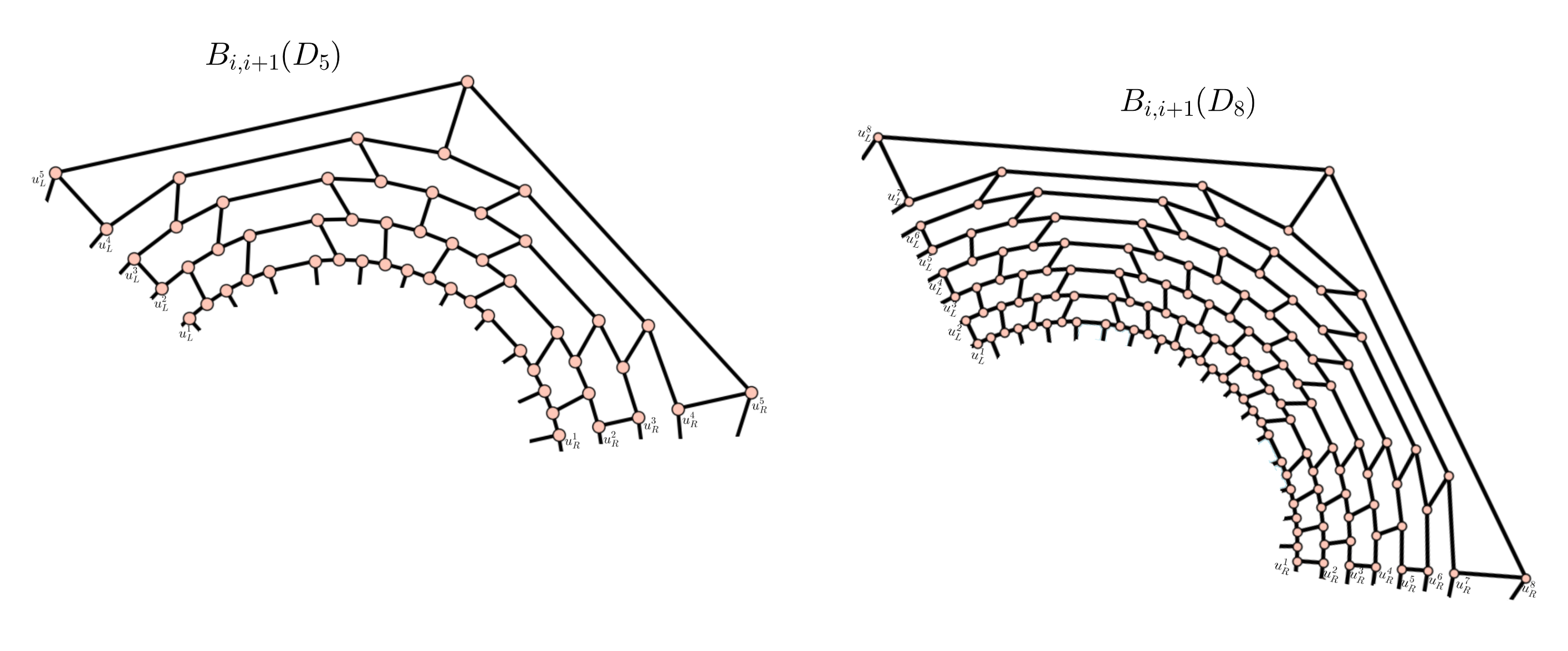}  
   \caption{On the left, a block $B_{i,i+1}$ in outer hemisphere of $D_5$; On the right, a block $B_{i,i+1}$ in outer hemisphere of $D_8$.}
    \label{f:BD5block}
\end{figure}
\FloatBarrier
As each subgraph $T(C^{t}) \in B_{i,i+1}(D_r)$ has $\frac{1}{3}|C^{t}|$ vertices, it is easy to see that $B_{i,i+1}(D_r)$ consists of one third of $O_{H}(D_r)$, as well as $B{'}_{i,i+1}(D_r)$ in relation to $I_{H}(D_r)$. We denote the set of vertices in the left frontier of $B_{i,i+1}(D_r)$ by 
$
V_{L} = \{u_{L}^{1},u_{L}^{2},\ldots ,u_{L}^{t-1},u_{L}^{t}\},
$
the set of edges in the left frontier by
$E_{L}= \{u_{L}^{2}u_{L}^{3}, u_{L}^{4}u_{L}^{5}\ldots u_{L}^{t-1}u_{L}^{t}\}$, if $r$ is odd; $E_{L}= \{u_{L}^{1}u_{L}^{2}, u_{L}^{3}u_{L}^{4}\ldots u_{L}^{t-1}u_{L}^{t}\}$, if $r$ is even, and the set of semiedges in the left frontier by $S_L = \emptyset$ if $r$ even, and $S_L = \{u_{L}^{1}{\cdot}\}$ if $r$ odd. Analogously, we define the set of vertices, edges, and semiedges of the elements in the right frontier, denoted by $V_R, E_R$, and $S_R$, respectively. 

Next, we will define an operation between blocks of the same hemisphere. In this operation, we consider the semigraph induced by vertices of the right frontier of a block, say $B_{1,2}(D_r) \subset O_H(D_r)$ and the semigraph induced by the left frontier of another block, say $B_{3,4}(D_r)$ in the same hemisphere, and we identify the vertices $V_R(B_{1,2}(D_r))$ with the vertices $V_L(B_{3,4}(D_r))$
as well their radial edges and semiedges, if any, and we keep the other elements of the two blocks. Therefore, we can formally define this operation described above as~follows.

\begin{definition}[Junction identifying]
Let $D_r, r\geq 2$ be a fullerene nanodisc and $B_{i,i+1},B_{s,s+1}, i,s \in \{1,3,5\}, i \neq s$, be two blocks of $O_{H} \subset D_r$. The \emph{junction identifying} of $B_{i,i+1}$ and $B_{s,s+1}$, denoted by $B_{i,i+1}\oplus B_{s,s+1}$
is an operation that produces the semigraph such that
$$
V(B_{i,i+1}\oplus B_{s,s+1})= V^{\oplus} \cup V(B_{s,s+1}) \setminus V_{R}(B_{i,i+1} )\cup V(B_{s,s+1}\setminus V_{L}(B_{s,s+1}), 
$$
$$
E(B_{i,i+1}\oplus B_{s,s+1}) = E^{\oplus} \cup E(B_{i,i+1}) \setminus E_{R}(B_{i,i+1}) \cup E(B_{s,s+1}) \setminus E_L(B_{s,s+1}),
$$
$$
S(B_{i,i+1}\oplus B_{s,s+1}) = S^{\oplus} \cup S(B_{i,i+1})\setminus S_{R}(B_{i,i+1}) \cup S(B_{s,s+1})\setminus S_{L}(B_{s,s+1}),
$$
\end{definition}
where $V^{\oplus},E^{\oplus},S^{\oplus}$ are the sets of vertices, edges, and semi-edges resulting from the identification of the elements $V_{R}(B_{i,i+1}),E_{R}(B_{i,i+1}),S_{R}(B_{i,i+1})$ with $V_{L}(B_{s,s+1}), E_{L}(B_{s,s+1}), S_{L}(B_{s,s+1})$, respectively. By the symmetry of the graph, the junction identifying operation is analogous for blocks $B{'}_{i,i+1}(D_r) \subset I_{H}$. Figure~\ref{f:JIDD5} illustrates the resulting semigraph of this operation in outer hemisphere.

\begin{figure}[!hbt]
    \centering
    \includegraphics[scale=1.60]{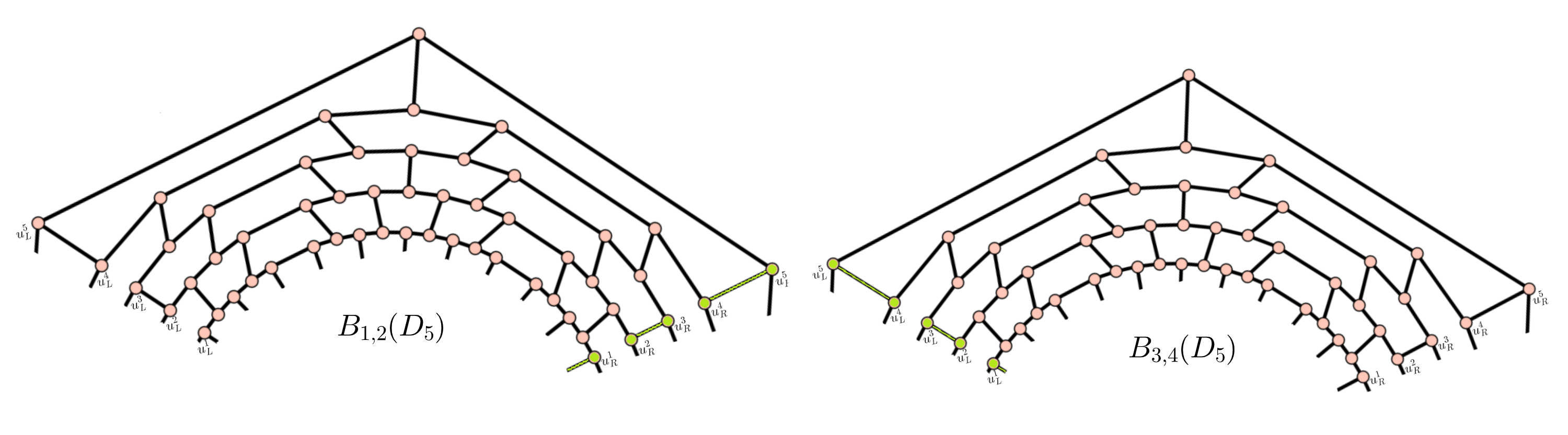}
    \centering
     \includegraphics[scale=2.09]{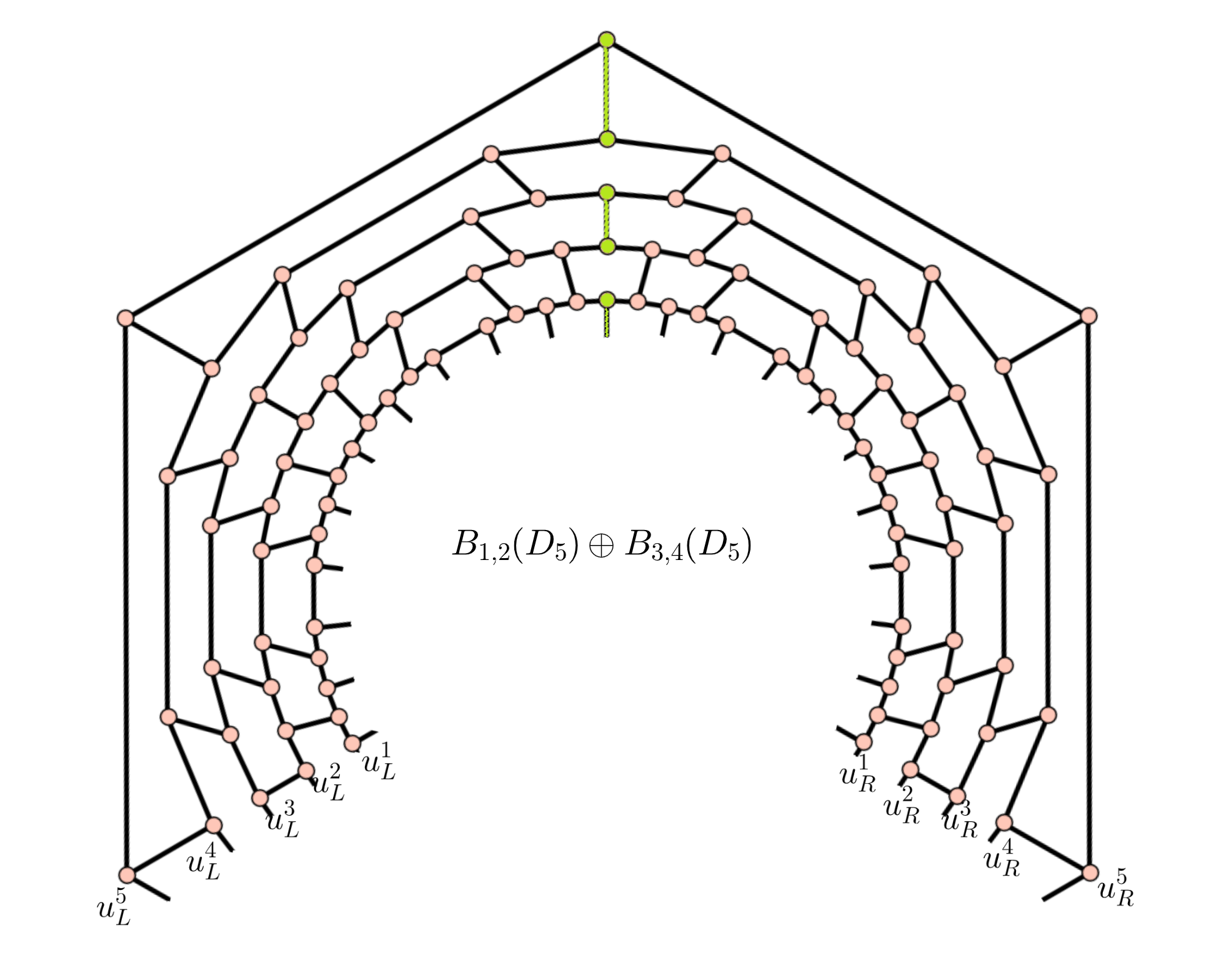}
   \caption{The junction identifying operation between two blocks in outer hemisphere of $D_5$.}
    \label{f:JIDD5}
\end{figure}

Let $B_{1,2},B_{3,4},B_{5,6}$ be three blocks of outer hemisphere of a nanodisc $D_r$, $r \geq 2$. Clearly, if we apply the operations (($B_{1,2} \oplus B_{3,4}) \oplus B_{5,6}) \oplus B_{1,2})$ the resulting semigraph is the outer hemisphere $O_{H}(D_r)$. Analogously, if we operate three blocks of the inner hemisphere in the same sequence, the semigraph obtained is the inner hemisphere $I_{H}(D_r)$.

We can also define an operation between blocks from different hemispheres. This operation consists of joining blocks in an appropriate way through their semiedges, taking advantage of the symmetry of the nanodisc by reflection. 

\begin{definition}[Junction operation]
Let $D_{r}, r\geq 2$ be a fullerene nanodisc and $B_{i,i+1}$, $B{'}_{i,i+1}$ be two blocks of $D_r$ of outer and inner hemispheres, respectively. The \emph{junction} between $B_{i,i+1}, B{'}_{i,i+1}$, denoted by $B_{i,i+1}\cup B{'}_{i,i+1}$ is an operation that produces the semigraph such that
$$
V(B_{i,i+1}\cup B{'}_{i,i+1}) = V(B(D_r)) \cup V(B'(D_r));
$$
$$
E(B_{i,i+1}\cup B{'}_{i,i+1}) = E(B_{i,i+1}) \cup E(B{'}_{i,i+1})\cup E''),
$$
where $E''$ is obtained by junction of semiedges such that we have two cases to consider: 
\begin{enumerate}
    \item If the pentagons $P_{i},P_{i}'$ are next to each other, we join the second semiedge of $B_{i,i+1}$ in clockwise with the first semiedge of $B{'}_{i,i+1}$ in clockwise.  The remaining $2r-1$ semiedges of $B_{i,i+1}$ are joined with $2r-1$ semiedges of $B{'}_{i,i+1}$ in a subsequent clockwise manner. Note that in this construction, one semiedge of each block is not joined to any semiedge. Thus, the set of semiedges $S(B_{i,i+1}\cup B{'}_{i,i+1})$ contains two semiedges, that is, $S^{(B_{i,i+1}\cup B{'}_{i,i+1})} = \{u{\cdot},v{\cdot}\}, u \in B_{i,i+1}$ and $v \in B{'}_{i,i+1}$.
    \item If the pentagons $P_{i},P_{i}'$ are consecutive, we join the $(\lfloor\frac{r+1}{3}\rfloor)$-th semiedge of $B_{i,i+1}$ in clockwise with the first semiedge of $B{'}_{i,i+1}$ in clockwise. The remaining  semiedges of $B_{i,i+1}$ are joined with semiedges of $B{'}_{i,i+1}$ in a subsequent clockwise manner. Note that in this construction, if $(\lfloor\frac{r+1}{3}\rfloor)$ is an even integer, $2(\lfloor\frac{r+1}{3}\rfloor)$ semiedges were not joined, being half of each block; if $(\lfloor\frac{r+1}{3}\rfloor)$ is an odd integer, $2(\lfloor\frac{r+1}{3}\rfloor-1)$ semiedges were not joined, being half of each block.

\end{enumerate}

\end{definition}



Note that any fullerene nanodisc $D_{r}$, $r\geq 2$  can be obtained by junction of the semigraphs $I_{H}(D_r)$ and $O_{H}(D_r)$. Figure~\ref{f:JunctionOP} illustrates the junction between blocks of different hemispheres.

\begin{figure}[!hbt]
    \centering
    \includegraphics[scale=0.48]{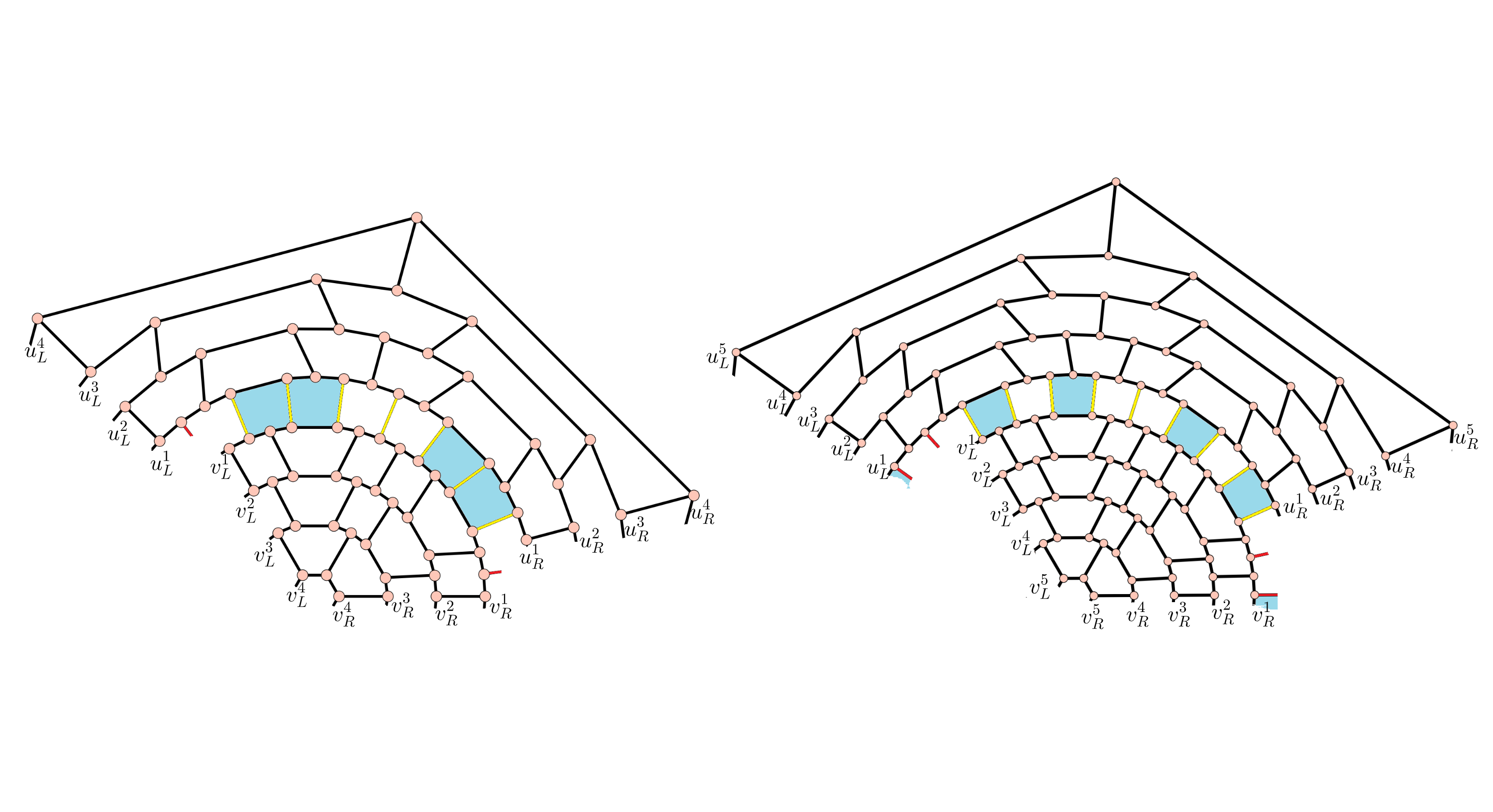}
    \caption{Junction between two blocks of different hemispheres of $D_4$ and $D_5$, respectively. The radial edges highlighted in yellow are the edges of $E''$ obtained by junction of semiedges, and the remaining semiedges are highlighted in red.}
    \label{f:JunctionOP}
\end{figure}
\FloatBarrier

This suitable decomposition will serve as a tool to obtain the main goal of this paper, as we will see in the next section.





\section{The fullerene nanodiscs $D_{r}, r = 5+3k$ are Type 1}
\label{s:Type1}

In this section, we prove that the fullerene nanodiscs 
$D_{r}, r = 5+3k$
have total chromatic number 4. Lemma~\ref{l:centrallayer4totalc} provides an important tool for the final result of this paper. The $4$-total coloring obtained for the target infinite family will be obtained by extending the $4$-total coloring given for central layer of $D_r$.
At first we restrict our study to a specific subfamily of graphs in this subclass, namely the nanodiscs $D_r$, $r = 5+3k$ whose pentagonal representation is in pairs, that is, the pentagons $P_{i}$, $P_i'$, $1\leq i \leq 6$, are next to each other. Such a representation exists for every nanodisc $D_r$, as we have seen in Section~\ref{s:fullerene}.



\begin{figure}[!hbt]
    \centering
    \includegraphics[scale=0.70]{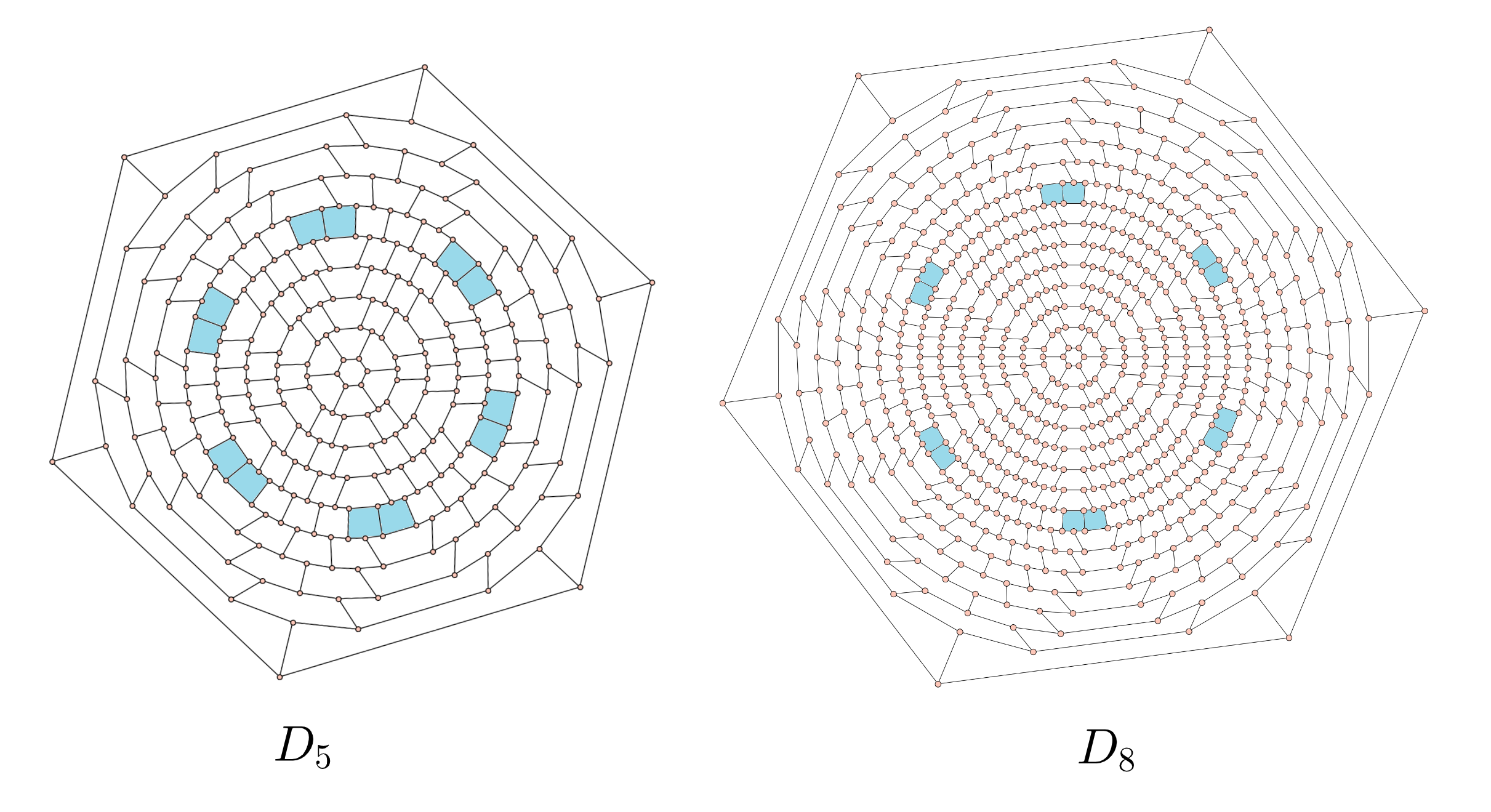}  
   \caption{For an arbitrary radius, there exist a nanodisc where  the pentagons occur in pairs and the hexagons occur in groups of  size a multiple of 3.}
    \label{f:D5andD8}
\end{figure}
\FloatBarrier

These graphs have an important property in the central layer, that is suitable to define a 4-total coloring with a circular symmetry.

\begin{fact}\label{fact1}
    The fullerene nanodiscs with $D_r$, $r = 5+3k$ have $3(k+1)$ hexagons between each pair of pentagons $P_{i},P_{i+1}$, $i \in \{1,3,5\}$, that is, the number of hexagons between each pair of pentagons partitioned in the same way is multiple of 3.
\end{fact}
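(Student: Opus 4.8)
The plan is to establish Fact~\ref{fact1} by a direct face-counting argument in the central layer, relying on the structural results of Section~\ref{s:fullerene}. First I would recall that the central layer of $D_r$ has exactly $6r$ faces, of which $12$ are pentagons and $6r-12$ are hexagons; since $r = 5+3k \geq 5 \geq 3$, Lemma~\ref{l:balanced} guarantees that all of these hexagons are balanced. By Lemma~\ref{l:nearestpentagons} the twelve pentagons alternate their partition as one traverses the layer, so they split into six pentagons $P_1,\ldots,P_6$ partitioned one way and six pentagons $P_1',\ldots,P_6'$ partitioned the other way, occurring alternately; in the pairs representation under consideration each $P_i$ is next to its partner $P_i'$.

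Next I would isolate the faces lying strictly between two consecutive same-type pentagons $P_i$ and $P_{i+1}$ for $i\in\{1,3,5\}$. Because consecutive pentagons alternate partition, exactly one opposite-type pentagon, namely $P_i'$, occurs between $P_i$ and $P_{i+1}$, and every other face in that arc is one of the balanced hexagons; so it suffices to count these faces. By the construction recalled in Section~\ref{s:block} there are $r-1$ faces between $P_i$ and $P_{i+1}$. Equivalently, by the circular symmetry of the pairs representation the $6r-12$ hexagons are divided evenly among the six arcs separating the six pentagon pairs, so each arc contains $(6r-12)/6 = r-2$ hexagons. Discarding the single pentagon $P_i'$ from the count of $r-1$ faces leaves precisely $r-2$ hexagons between $P_i$ and $P_{i+1}$.

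Finally I would substitute the radius. Writing $r = 5+3k$ gives $r-2 = 3+3k = 3(k+1)$, which is manifestly a multiple of $3$, exactly the claimed number of hexagons between each pair $P_i,P_{i+1}$. The only point requiring care is the justification that the hexagons distribute evenly across the six inter-pair arcs; this is where the three structural lemmas (Large Forbidden Face, Balanced Hexagons, and Alternating Pentagons) do the real work, ruling out any irregular clustering of pentagons. Once that even distribution is in hand, the remainder of the argument is the one-line arithmetic above, so I expect the symmetry and regularity justification to be the main, though modest, obstacle.
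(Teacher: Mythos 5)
Your proof is correct and takes essentially the same route as the paper, which offers no formal proof of this Fact beyond the observation in Section~\ref{s:block} that by construction there are $r-1$ faces between $P_i$ and $P_{i+1}$, exactly one of which is the pentagon $P_i'$, so that the count of hexagons is $r-2 = 3+3k = 3(k+1)$. One small caveat: the even distribution of the $6r-12$ hexagons among the six arcs is not actually delivered by the three structural lemmas as you suggest (they give alternation and balancedness, not equal spacing of the pentagon pairs); the paper instead treats it as part of the choice of the specific representative nanodisc with pentagons in adjacent pairs, symmetrically placed (cf.\ the caption of Figure~\ref{f:D5andD8}), so your attribution of the ``real work'' there slightly overstates what those lemmas provide.
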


See examples in Figure~\ref{f:D5andD8}. By Lemma~\ref{l:centrallayer4totalc}, the $3$-total coloring attributed to these graphs has a suitable coloring structure as a result of the number of balanced hexagons being multiple of 3: every pentagon $P_i$ has the same coloring structure and every pentagon $P_i'$ also has the same coloring structure, for every $i \in \{1,2, \ldots 6\}$. Figure~\ref{f:D5wYapproperty} illustrates a $4$-total coloring obtained for the central layer of $D_5$ using Lemma~\ref{l:centrallayer4totalc}.

\begin{figure}[!hbt]
    \centering
    \includegraphics[scale=5.33]{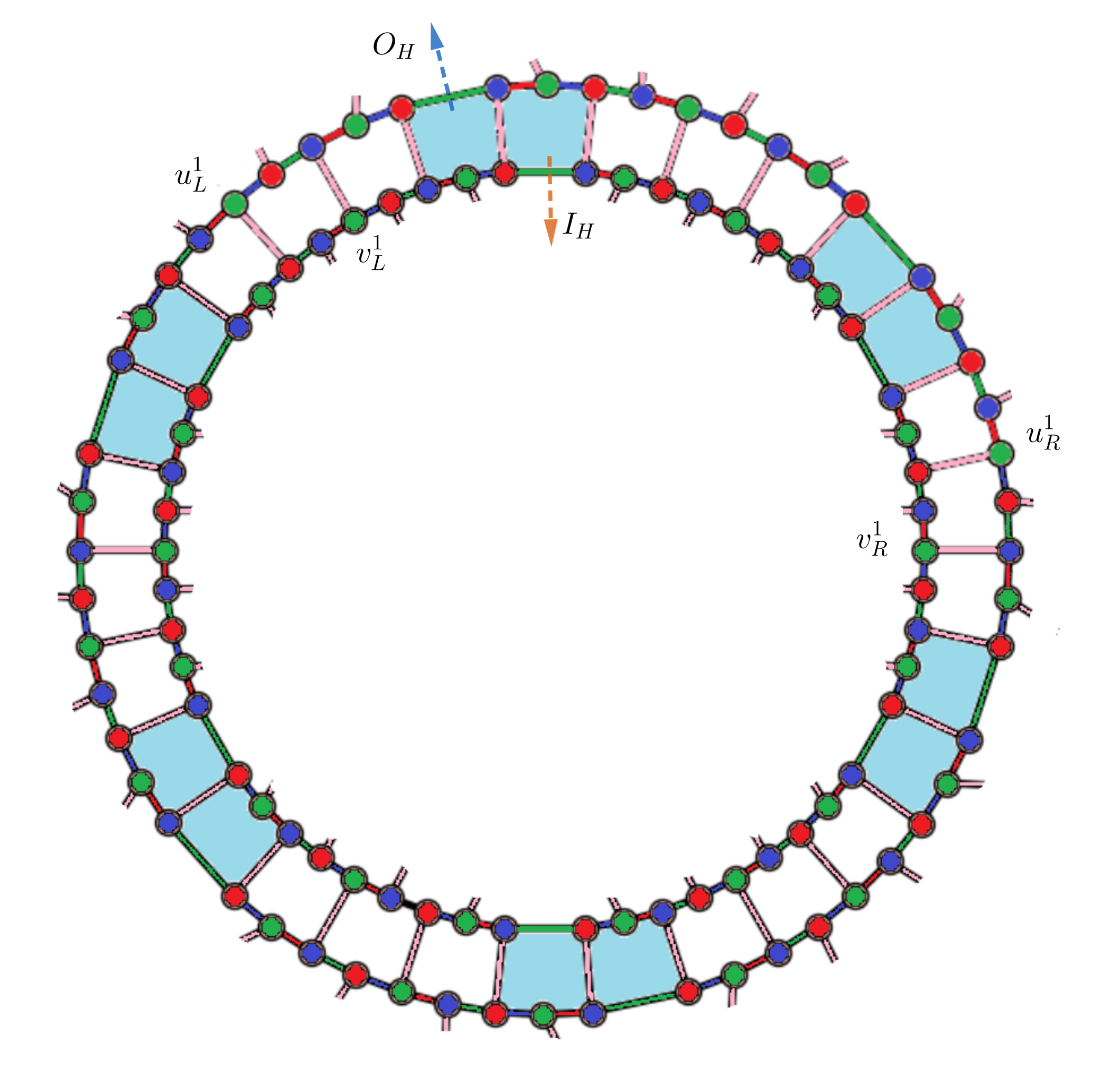}  
   \caption{The 4-total coloring of the central layer of a $D_5$. Three colors are used in each auxiliary cycle $C_{54}$. In any extension to a 4-total coloring of the whole graph, all radial edges incident to the central layer must be colored with color 4. We denote the color red as $1$, green as $2$, blue as $3$, and pink as $4$.
   Observe the two labeled  vertices $v_{L}^{1}$ and $v_{R}^{1}$ of the central layer. Both $v_{L}^{1}$ and $v_{R}^{1}$ receive color $2$. Vertices $v_{L}^{1}$, $v_{R}^{1}$, and the 17 vertices between them are block vertices. }
    \label{f:D5wYapproperty}
\end{figure}
\FloatBarrier



Consider the nanodisc block $B(D_r) \subset O_H(D_r)$. As an auxiliary result, we will prove that such semigraph is Type 1, and by isomorphism, $B'(D_r) \subset I_H(D_r)$ is also Type 1. Finally, the inter-block operations defined in Section~\ref{s:block} show that the coloring of a block defines a $4$-total coloring for every $D_r$, $r = 5+3k$, $k \in \mathbb{Z}^{+}$, with the properties defined in this section.




\begin{figure}[!hbt]
    \centering
    \includegraphics[scale=0.83]{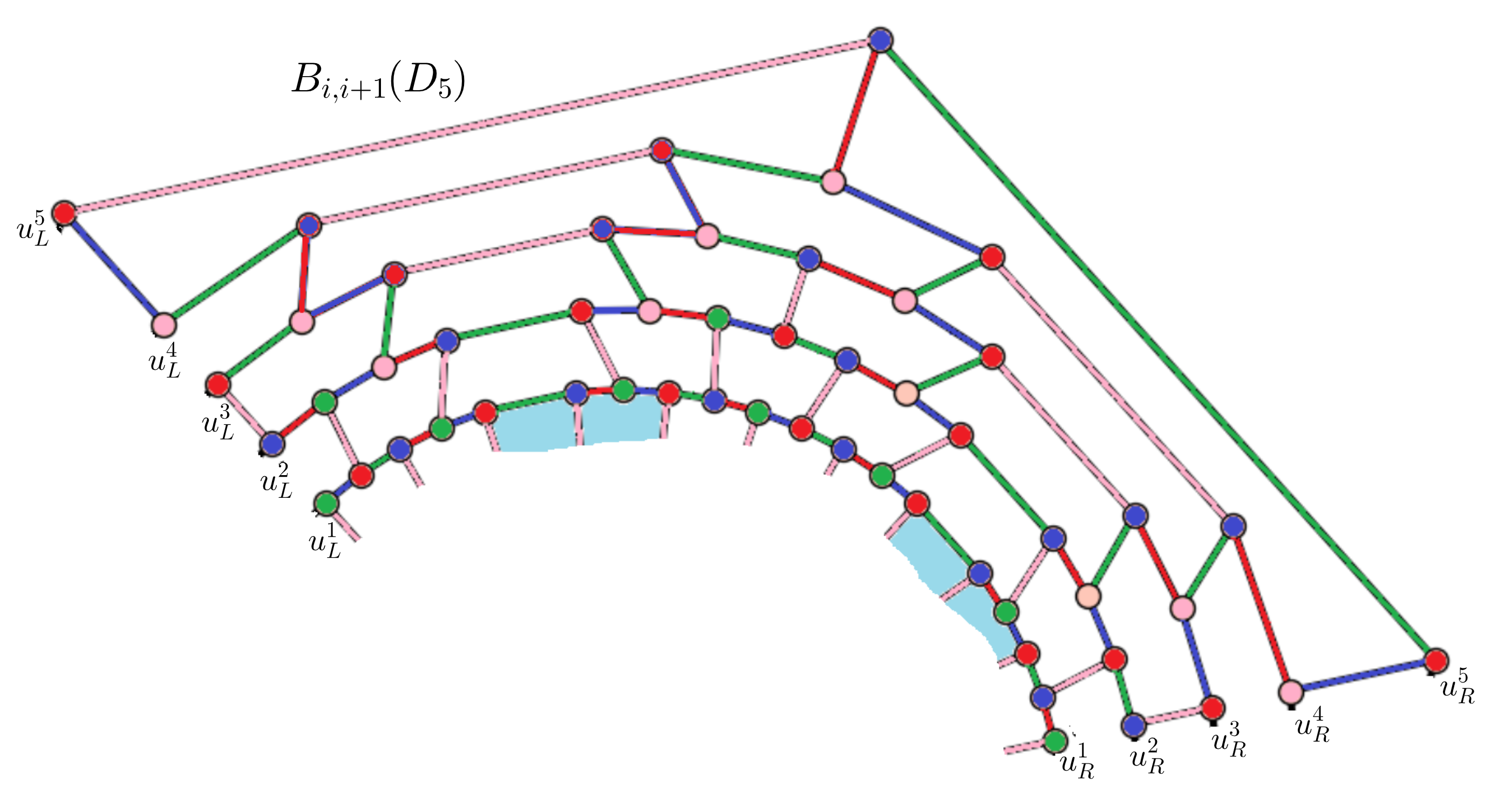}  
   \caption{A block of $D_5$ in outer hemisphere colored with a $4$-total coloring, satisfying that all radial edges incident to the central layer are colored with color 4.}
    \label{f:blockD54total}
\end{figure}
\FloatBarrier

\begin{figure}[!hbt]
    \centering
    \includegraphics[scale=0.55]{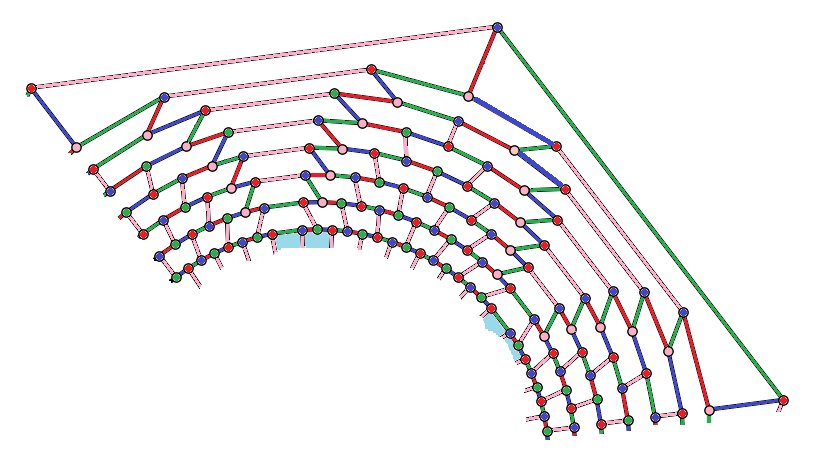}  
   \caption{The $4$-total coloring of a block of $D_{8}$ in outer hemisphere provided by Lemma~\ref{l:blocks4colorable}.}
    \label{f:BD84totalcoloring}
\end{figure}
\FloatBarrier

\begin{figure}[!hbt]
    \centering
    \includegraphics[scale=0.30]{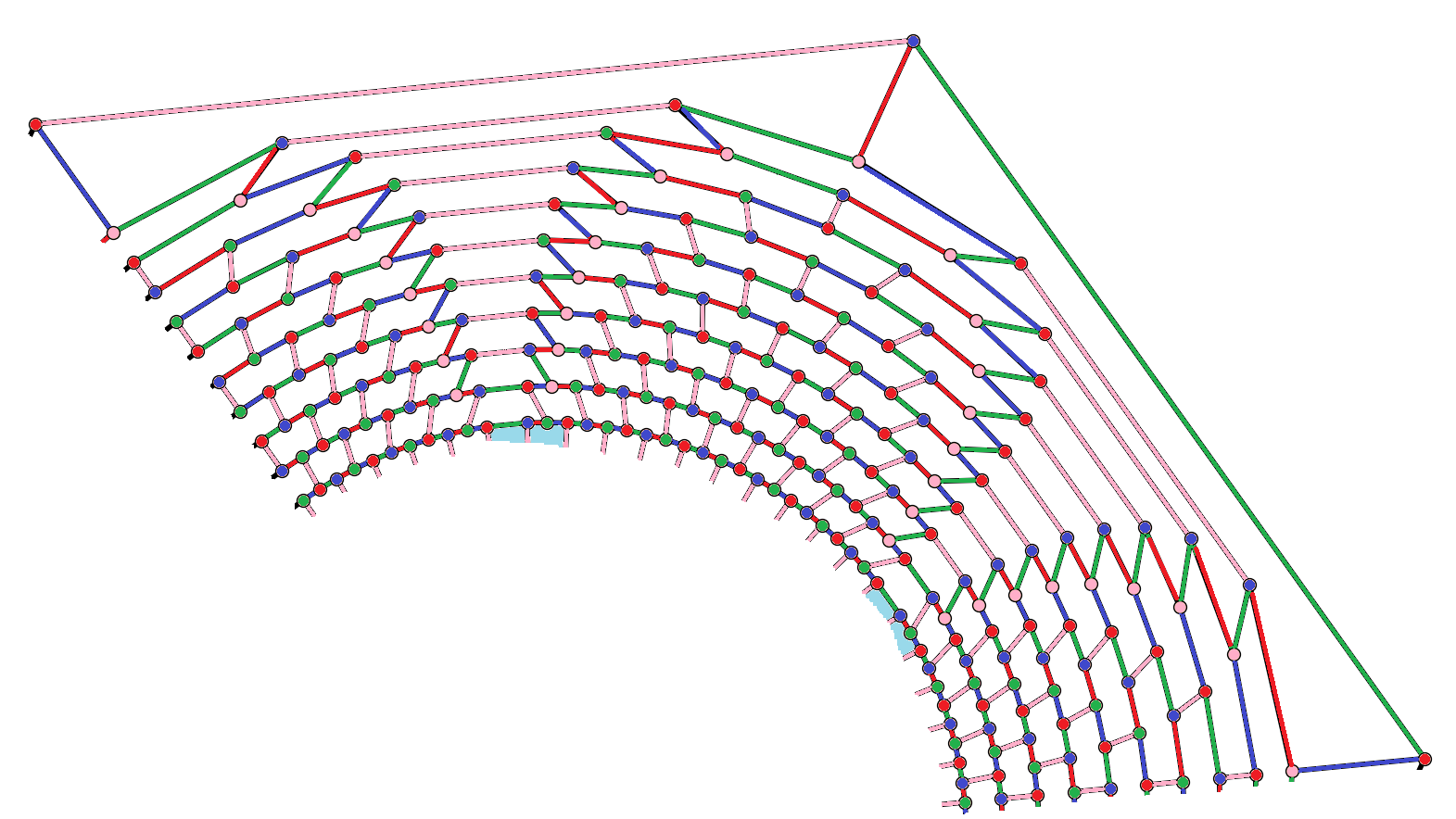}  
   \caption{The $4$-total coloring of a block of $D_{11}$ in outer hemisphere provided by Lemma~\ref{l:blocks4colorable}.}
    \label{f:BD11}
\end{figure}
\FloatBarrier




\begin{lemma}\label{l:blocks4colorable}
The blocks $B_{i,i+1}(D_r) \subset O_H$, 
$r = 5+3k, k \in \mathbb{Z}^{+}$,
are $4$-total colorable.

\end{lemma}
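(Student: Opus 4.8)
The plan is to construct an explicit 4-total coloring of the block by fixing the colors on its top path and propagating them inward, exploiting the periodicity guaranteed by the hypothesis $r = 5+3k$. First I would take the colors of the top path $T(C^1)$ to be exactly those inherited from the 4-total coloring of the central layer produced in Lemma~\ref{l:centrallayer4totalc}; in particular every radial edge incident to the central cycle $C^1$ (including the semiedges $u{\cdot}$ of the block) receives color $4$. By Fact~\ref{fact1}, together with the pairing of the pentagons, the number of balanced hexagons between consecutive pentagons is a multiple of $3$, so the vertex pattern dictated by Theorem~\ref{3coloringcn} makes every pentagon $P_i$ present the same local coloring and renders the pattern on $T(C^1)$ periodic. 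This simultaneously pins down the colors of the frontier vertices $u_{L}^{1},u_{R}^{1}$ and of the frontier edges of $E_L,E_R$, which is precisely what later makes the junction identifying operation consistent.

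Next I would color the interior paths $T(C^2),\ldots,T(C^r)$ one layer at a time, towards the inner cap. On each $C^t$ I would use the period-$3$ assignment of Theorem~\ref{3coloringcn} for the vertices and cycle-edges of $T(C^t)$, and then choose the colors of the radial edges joining $T(C^{t-1})$ to $T(C^t)$. Since $|C^t| = 12(r-t)+6$ is divisible by $3$ and, by Fact~\ref{fact1}, the pentagon spacing is a multiple of $3$, the vertex patterns align from layer to layer; this lets me describe the whole block by a single period, one pentagon-to-pentagon wedge running through all $r$ layers, and then tile it, with $r\equiv 2\pmod 3$ ensuring that the tiling closes up both along each path and at the two frontiers. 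The base radii $D_5, D_8, D_{11}$ shown in Figures~\ref{f:blockD54total}, \ref{f:BD84totalcoloring} and~\ref{f:BD11} anchor the period, and the general $r=5+3k$ follows by repeating it.

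The main obstacle is the interior radial edges: by Lemma~\ref{l:notextend} one cannot keep color $4$ on all radial edges down to the innermost cycle $C_6$, so the construction must deliberately break the all-$4$ pattern somewhere inside the block and reassign those radial edges to colors in $\{1,2,3\}$ without creating a conflict at an endpoint or inside a face. I would handle this by specifying, within the single period, exactly which radial edges deviate from color $4$ and to which color, then verifying the three total-coloring conditions face by face; the unbalanced hexagons that appear while moving inward, governed by Lemma~\ref{l:balanced} and Lemma~\ref{l:nearestpentagons}, are the delicate spots. Two final checks remain: the inner cap, where $T(C^r)$ collapses to the $2$-vertex path in $C_6$ and must be colored with the two colors left after color $4$ is used on its radial edges, and the two frontiers, where I must confirm that the colors of $V_L,E_L,S_L$ and $V_R,E_R,S_R$ match the periodic template so that the block is genuinely 4-total colorable as a semigraph.
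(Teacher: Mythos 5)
Your overall strategy matches the paper's: seed the block from the central-layer coloring of Lemma~\ref{l:centrallayer4totalc}, use $r=5+3k$ to make $d(u_{L}^{1},u_{R}^{1})=4r-2$ a multiple of $3$ so the two frontier vertices of $T(C^{1})$ agree in color, propagate inward layer by layer, and treat the radial edges of the unbalanced hexagons as the delicate spots. But there is a genuine gap: you never exhibit the coloring whose existence is the content of the lemma. The paper's proof is essentially explicit data. Each path $T(C^{t})$ is split into five subpaths by the two unbalanced hexagons that the block meets at depth $t$; explicit color sequences are given for the base cases $T(C^{1}),T(C^{2}),T(C^{3}),T(C^{r-1}),T(C^{r})$ (Figure~\ref{f:tctpathsbasis}); and the intermediate layers $4\le t\le r-2$ repeat the patterns of $T(C^{4}),T(C^{5}),T(C^{6})$ with period $3$ in $t$ (Figure~\ref{f:repetitionpaths}). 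This is where $r=5+3k$ enters a second time: the repeated range has length $r-5=3k$, a multiple of $3$, so the radial repetition meshes with both the central base cases and the cap. Writing that you would ``specify exactly which radial edges deviate from color $4$ and verify face by face'' defers precisely the step that could fail; without the explicit period one cannot check that a single color always remains free for each radial edge of an unbalanced hexagon, nor that the frontier elements $V_L,E_L,S_L$ and $V_R,E_R,S_R$ come out compatible for the later junction operations.

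Two smaller points. First, you describe the periodicity as tiling by ``one pentagon-to-pentagon wedge running through all $r$ layers''; inside a single block that angular period does not exist, since a block contains exactly one pair $P_{i},P_{i+1}$ with $3(k+1)$ hexagons between them, and the repetition the paper actually uses is radial (every three layers going inward), not angular. Second, your treatment of the inner cap assumes all radial edges into $T(C^{r})$ carry color $4$ and that its two vertices then take the two remaining colors; Lemma~\ref{l:notextend} is exactly the warning that the all-$4$ pattern cannot be sustained to the innermost cycle, and the paper's base-case colorings of $T(C^{r-1})$ and $T(C^{r})$ are chosen precisely to avoid this configuration rather than to instantiate it.
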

\begin{proof}
We ask the reader to follow Figures~\ref{f:blockD54total}, \ref{f:BD84totalcoloring} and~\ref{f:BD11} to understand the coloring structure defined below. 
Let $D_r$, $r = 5+3k$, $k \in \mathbb{Z}^{+}$ be a fullerene nanodisc and $B_{i,i+1} \subset O_{H}$ be a block of $D_r$. The assignment of colors to the elements of $T(C^{1})$ follows from Lemma~\ref{l:centrallayer4totalc}. Note that by Lemma~\ref{l:distancevRvL}, $dist(u_{L}^{1},u_{R}^{1}) = 4r-2$, and since $r = 5+3k$, the distance between $u_{L}^{1},u_{R} ^{1}$ is a multiple of 3, and therefore $u_{L}^{1}$and $u_{R}^{1}$ receive the same color. 
In this case, we will assume that $c(u_{L}^{1})=c(u_{R}^{1}) = 2$.
We will define the total coloring of each path $T(C^{t})$ with the set of colors $\{1,2,3,4\}$. By counting of vertices, $T(C^{t})$ has $4(r-t)+3$ vertices, $1 \leq t \leq r$. Observe that each layer between $T(C^{t})$ and $T(C^{t+1})$, $1 \leq t \leq r-1$, contains two unbalanced hexagons within $B_{i,i+1}(D_r)$. Each of these hexagons contains 4 vertices in $T(C^{t})$ and 2 vertices in $T(C^{t+1})$.
Given these two unbalanced hexagon, every path may be divided into 5 smaller paths: a path of size $(r-t-1)$ starting at $u_{R}^{t}$ to the first vertex of $T(C^{t})$ that belongs to the first unbalanced hexagon;
a path consisting of the 4 vertices of the first unbalanced hexagon;
a path of size $2(r-t)-3$ starting at the last vertex of $T(C^{t})$ that belongs to the first unbalanced hexagon to the first vertex of $T(C^{t})$ that belongs to the second unbalanced hexagon;
the path that contains the 4 vertices of the second unbalanced hexagon and the path of size $(r-t-1)$ starting at the last vertex of $T(C^{t})$ that belongs to the second unbalanced hexagon and ending at $u_{L}^{t}$. 

Up to define the desired total coloring based on the above structure of each path, we will divide all $T(C^{t})$, $1 \leq t \leq r$, in two cases: the base case and the inductive step.
The base case consists of the 5 paths $T(C^{1}),T(C^{2}), T(C^{3}), T(C^{r-1})$ and $T(C^{r})$, see Figure~\ref{f:tctpathsbasis}. Note that for $r=5$, every $T({C^{t}})$ is a path in the base case for $B_{i,i+1}(D_5)$. We define the sequence of colors to every base case according to Figure~\ref{f:tctpathsbasis}.

\begin{figure}[!hbt]
    \centering
    \includegraphics[scale=0.98]{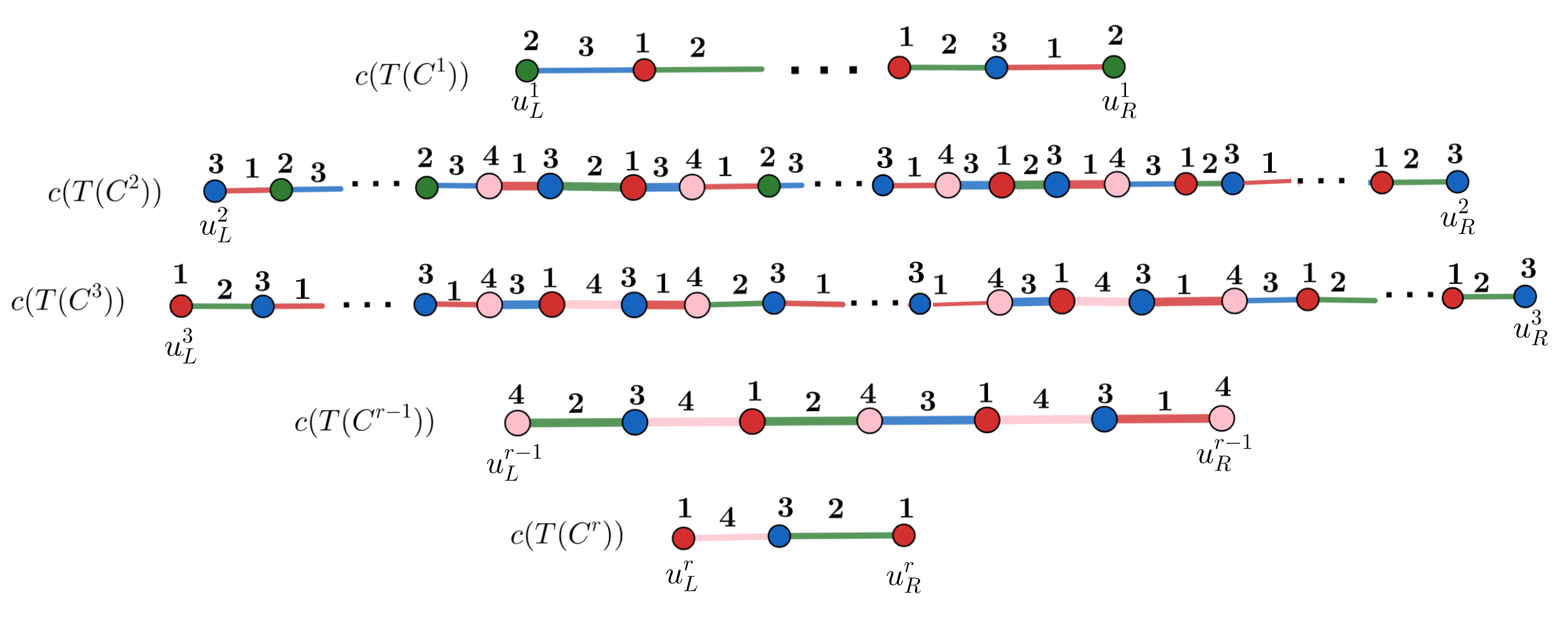}  \caption{Sequence of colors of the base case.}
    \label{f:tctpathsbasis}
\end{figure}
\FloatBarrier

The inductive step consists of the remaining paths $T(C^{t}), 4 \leq t \leq r-2, r = 5+3k, k > 1$, based on the repetition below and the total coloring of $T(C^{4}),T(C^{5}), T(C^{6})$, see Figure~\ref{f:repetitionpaths}.
$$
c(T(C^{t})) = \begin{cases} c (T(C^{4})), &\mbox{if } t \equiv 1\mod 3, 4 \leq t \leq r-2;\\
c(T(C^{5})), &\mbox{if } t \equiv 2\mod 3, 5 \leq t \leq r-2;\\
c(T(C^{6})), &\mbox{if } t \equiv 0\mod 3, 6 \leq t \leq r-2.
\end{cases}
$$

\begin{figure}[!hbt]
    \centering
    \includegraphics[scale=0.98]{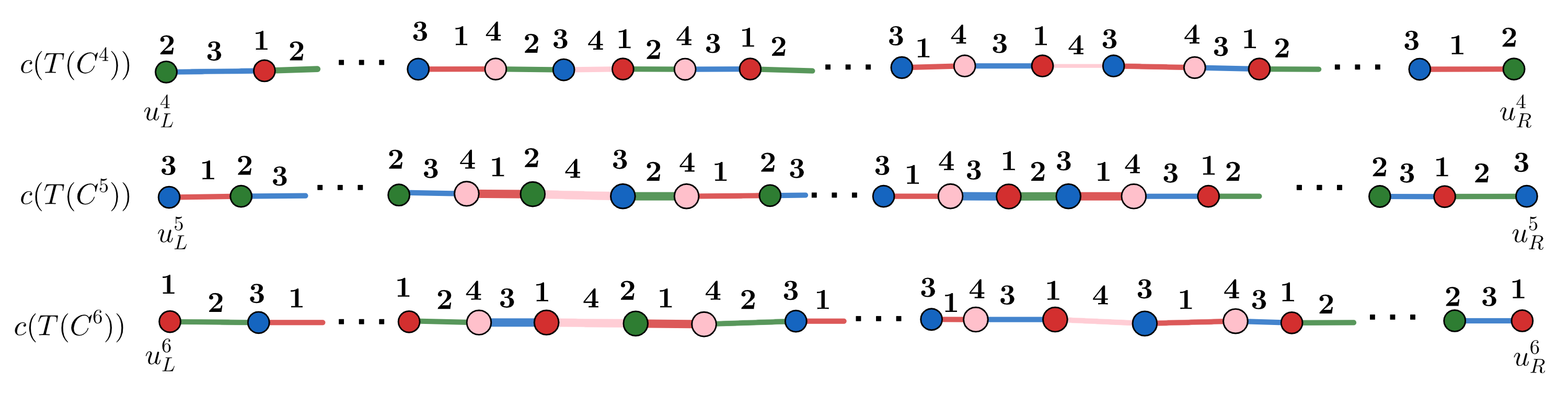}  \caption{Sequence of colors of repetition paths $T(C^{4}), T(C^{5})$ and $T(C^{6})$.}
    \label{f:repetitionpaths}
\end{figure}
\FloatBarrier

For the radial edges that are edges of the unbalanced hexagons, it is straightforward to check that in each case there are no conflicts between the vertices and a single color remains for the radial edge. Recall that the vertices of $T(C^{t})$ that do not belong to an unbalanced hexagon define 3 paths, and each path is colored according to the order 2 (green), 3 (blue), 1 (red), like it was done for $T(C^{1})$, see pattern in Figures~\ref{f:tctpathsbasis} and~\ref{f:repetitionpaths}. For each of these paths, one can easily verify that there is no conflict for the first radial edge that can be colored with color 4 (pink).
Moreover, for the other radial edges, as they are edges of balanced hexagons and join two vertices at the same distance of the first radial edge, they will connect two vertices that do not have the same color and can be colored with color 4 (pink).
Since the elements of $T(C^{1})$ are colored with 3 colors, assign the color 4 (pink) to every semiedge of $B_{i,i+1}(D_r)$ and therefore it is concluded that $B_{i,i+1}(D_r)$, $r = 5+3k, k \in \mathbb{Z}^{+}$ is 4-total colorable.
Since $B_{i,i+1}$ and $B{'}_{i,i+1}$ are isomorphic semigraphs, the $c$ 
coloring described above is also a 4-total coloring for $B{'}_{i,i+1}$. Thus, it is concluded that such blocks are Type 1. 
\end{proof}

Given two $4$-total colorings $c, c'$
of two blocks $B_{i,i+1}$, $B_{s,s+1}$, with $(i,s) \in \{(1, 3),(3, 5), (5, 1)\}$, of a nanodisc $D_r$, $r \geq 2$, in the same hemisphere. We say that $c$ 
is \emph{compatible} with $c'$ 
if:

\begin{itemize}
    \item The color assigned by $c$ to $u_{R}^{t}$ in $B_{i,i+1}$ is the same as the color assigned by $c'$ to $u_{L}^{t}$ in $B_{s,s+1}$,  for each $t \in \{1,2,\ldots r-1,r\}$;
    \item All $2r+1$ semiedges of each block have the same color;
    \item The color assigned by $c$ to the radial edge incident to $u_{R}^{t}$ in $B_{i,i+1}$ is the same as the color assigned by $c'$ to the radial edge incident to $u_{L}^{t}$ in $B_{s,s+1}$,  for each $t \in \{2,\ldots r-1,r\}$;
    \item The color assigned by $c$ to the edge incident to $u_{R}^{t}$, that is not a radial edge, in $B_{i,i+1}$ is different from the color assigned by $c'$ to the edge incident to $u_{L}^{t}$, that is not a radial edge, in $B_{s,s+1}$,  for each $t \in \{2,\ldots r-1,r\}$.
\end{itemize}

Hence, $c$ and $c'$ 
generate a $4$-total coloring for a junction identifying of $B_{i,i+1}$ and $B_{s,s+1}$, and analogously for blocks of the inner hemisphere. 
The $4$-total colorings for blocks $B_{i,i+1}$, $B{'}_{i,i+1}$, $i \in \{1,3,5\}$, of $D_r$ 
for every $r \geq 2$, are called \emph{mutually compatible} if, for every two colorings 
$c,c'$ 
(not necessarily distinct) of the nanodisc blocks 
$B_{i,i+1} \subset O_H, B{'}_{i,i+1}\subset I_H$:

\begin{itemize}
    \item All $2r+1$ semiedges of each block have the same color;
    \item The color assigned to the extreme vertices of any semiedge in $B_{i,i+1}$ is different from the one assigned to the extreme of the corresponding semiedge in $B{'}_{i,i+1}$.
\end{itemize}

This implies that any sequence of junction identifying  and also of junction between blocks provides a $4$-total coloring for these semigraphs. Figure~\ref{f:compatibleblocks} shows a $4$-total coloring obtained from compatible colorings for the semigraph generated by junction identifying of two blocks of the outer hemisphere of $D_5$.

\begin{figure}[!hbt]
    \centering
    \includegraphics[scale=0.60]{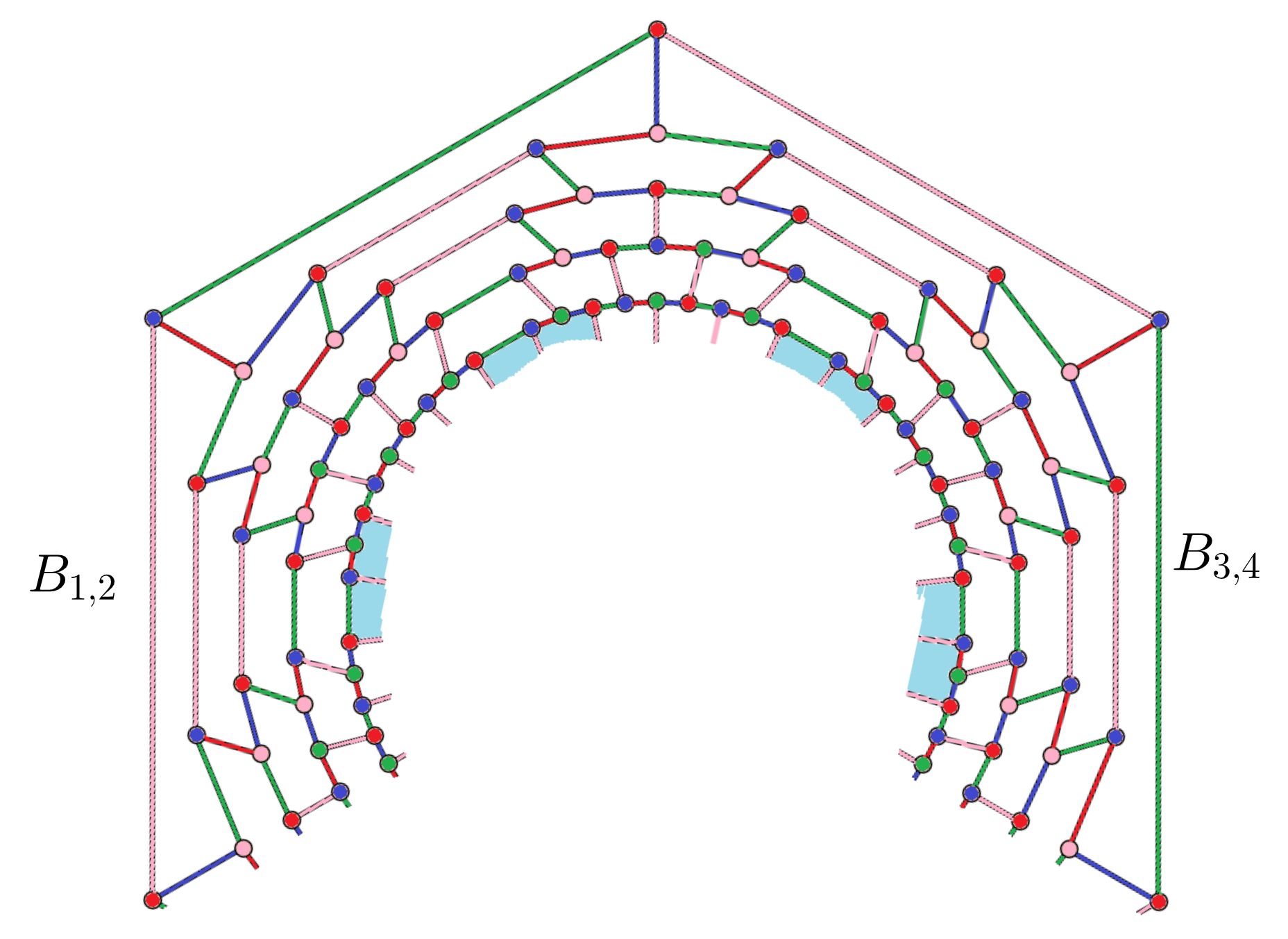}  
    \caption{A $4$-total coloring obtained from compatible colorings for two blocks $B_{1,2}, B_{3,4}$ in the outer hemisphere of $D_{5}$.}
    \label{f:compatibleblocks}
\end{figure}
\FloatBarrier

\begin{theorem}
    The fullerene nanodiscs $D_{r}$, with $r = 5+3k$, $k \in \mathbb{Z}^{+}$, are~Type~1.
\end{theorem}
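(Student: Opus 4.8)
The plan is to assemble a $4$-total coloring of the whole nanodisc out of the single block coloring produced in Lemma~\ref{l:blocks4colorable}, by running the decomposition of Section~\ref{s:block} in reverse. First I would fix the $4$-total coloring $c$ of a block $B_{i,i+1}(D_r)\subset O_H$ given by Lemma~\ref{l:blocks4colorable}, and exploit the hypothesis $r=5+3k$: by Fact~\ref{fact1} the number of balanced hexagons between consecutive pentagons is a multiple of $3$, so $d(u_L^1,u_R^1)=4r-2=6(2k+3)$ is divisible by $3$ and Yap's coloring (Theorem~\ref{3coloringcn}) assigns the same colour to the two endpoints of $T(C^1)$. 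This is exactly the circular symmetry noted after Fact~\ref{fact1}, which lets me place one and the same coloring $c$ on all three blocks $B_{1,2},B_{3,4},B_{5,6}$ of the outer hemisphere.

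Next I would check that $c$ is \emph{compatible} with itself in the sense defined above, verifying the four frontier conditions at each level $t$: the colours of $u_R^t$ and $u_L^t$ agree, all $2r+1$ semiedges carry the common colour $4$, the radial edges incident to $u_R^t$ and $u_L^t$ agree, and the two non-radial frontier edges differ. Granting these, the junction identifying operation $((B_{1,2}\oplus B_{3,4})\oplus B_{5,6})\oplus B_{1,2}$ introduces no conflict at any identified vertex or edge and therefore returns a valid $4$-total coloring of $O_H(D_r)$. Since $O_H\cong I_H$, the mirrored coloring colours $I_H(D_r)$ in the same fashion.

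I would then glue the two hemispheres by the junction operation. Because the chosen representative has the pentagons $P_i,P_i'$ next to each other we are in Case~1, all $2r+1$ semiedges of each block carry colour $4$, and the only remaining requirement for \emph{mutual compatibility} is that the vertex at each semiedge endpoint in $O_H$ lies in a different colour class from the matched endpoint in $I_H$. This is precisely the conflict-freeness between the extremes of the central radial edges already established in Lemma~\ref{l:centrallayer4totalc}, since the outer cycle is colour-$1$-anchored at $u_0$ and the inner cycle colour-$3$-anchored at $v_0$. Hence joining the semiedges into the radial edges of $E''$ produces a genuine $4$-total coloring of the whole $D_r$.

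Finally, since $D_r$ is cubic, $\chi''(D_r)\ge \Delta(D_r)+1=4$, while the coloring just built shows $\chi''(D_r)\le 4$; thus $\chi''(D_r)=4$ and $D_r$ is Type~1. The step I expect to be the main obstacle is the compatibility bookkeeping: confirming, uniformly in $t$ and independently of the exact value $r=5+3k$, that the frontier vertices and radial edges match while the non-radial frontier edges differ, and that no two matched semiedge endpoints across the reflection collide. The $r=5+3k$ hypothesis is what keeps these checks uniform, because the multiple-of-$3$ spacing guarantees that the repeated block coloring closes up consistently both around the central cycle and across the hemisphere reflection.
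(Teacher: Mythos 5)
Your proposal follows essentially the same route as the paper: place the block coloring of Lemma~\ref{l:blocks4colorable} on all three blocks of each hemisphere, verify the compatibility conditions at the frontiers so that the junction identifying operations reassemble $O_H$ and $I_H$ without conflict, and then use the fact that the central-layer elements are colored as in Lemma~\ref{l:centrallayer4totalc} to get mutual compatibility across the hemisphere junction. The only addition is your explicit closing remark that $\chi''(D_r)\ge\Delta+1=4$ for a cubic graph, which the paper leaves implicit; otherwise the argument and its reliance on the $r=5+3k$ divisibility are the same.
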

\begin{proof}
We can obtain any nanodisc $D_r, r \geq 2$ from successive operations of junction identifying between blocks of the same hemisphere and junction between blocks of different hemispheres, as seen in Section~\ref{s:block}. Let $B_{1,2},B_{3,4},B_{5,6}$ be three blocks of $D_r$, $r = 5+3k, k \in \mathbb{Z}^{+}$ in the outer hemisphere. First, given the 4-total coloring $c$ 
shown in Lemma~\ref{l:blocks4colorable} for the three blocks, we must show that
the colorings of $B_{i,i+1}$, $B_{s,s+1}$, $(i,s) \in \{(1, 3),(3, 5), (5, 1)\}$ are compatible.
To prove that the colorings of $B_{1,2}$ and $B_{3,4}$ are compatible, note that every vertex in the frontier $V_{R}(B_{1,2})$ received the same color of $V_{L}(B_{3,4})$, as well the radial edges $E_{R}(B_{1,2})$ and $E_{L}(B_{3,4})$, and the remaining edges incident to each frontier vertex received distinct colors, i.e., there is no color conflict. Therefore, the coloring $c$ given for $B_{1,2}$ and $B_{3,4}$ provides a 4-total coloring for the semigraph generated by $B_{1,2} \oplus B_{3,4}$. The same argument can be given for $B_{3,4} \oplus B_{5,6}$ and $B_{5,6} \oplus B_{1,2}$. As we have seen in Section~\ref{s:block}, by combining such blocks, we obtain a 4-total coloring for the outer hemisphere $O_H$. Since the hemispheres of $D_r$ are isomorphic, it is proven analogously that, given a 4-total coloring $c'$ provided by Lemma~\ref{l:blocks4colorable} for the blocks of the inner hemisphere, $I_H$ is 4-total colorable. Finally, since the elements of $T(C^{1}) \subset O_H$ and $T(C^{1}) \subset I_H$ are colored by Lemma~\ref{l:centrallayer4totalc}, we have that the junction of blocks $B_{i,i+1} \subset O_H$ and $B{'}_{i,i+1}, i \in \{1,3,5\}$ has no color conflicts, and it implies that $c(O_H)$ and $c'(I_H)$ are mutually compatible. Thus, it is concluded that $D_r$, with $r = 5+3k, k \in \mathbb{Z}^{+}$ is 4-total colorable. Figure~\ref{f:D54total1} displays a $4$-total coloring for an instance of $D_5$.
\end{proof}

\begin{figure}[!hbt]
    \centering
    \includegraphics[scale=0.33]{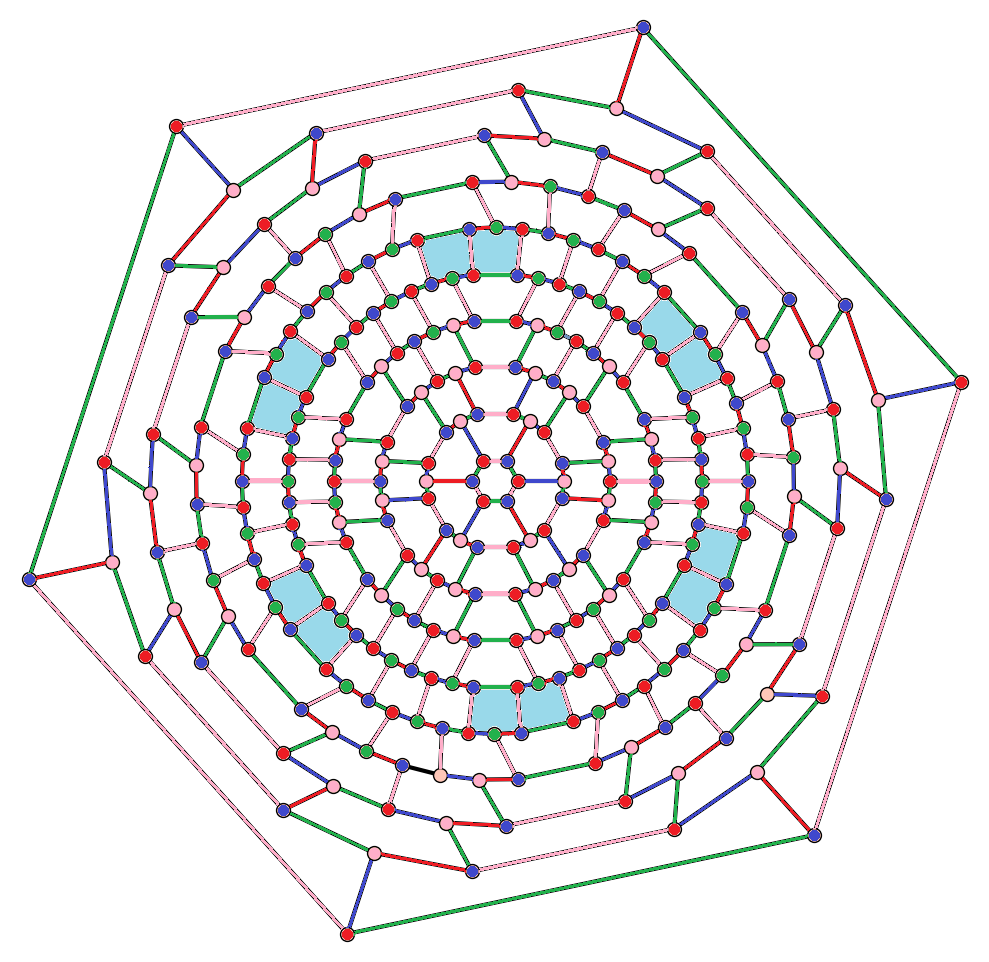}  
   \caption{A 4-total coloring of an instance of fullerene nanodisc $D_5$.}
    \label{f:D54total1}
\end{figure}
\FloatBarrier

Currently, we are trying to classify in Type 1 or Type 2 all remaining infinite families of fullerene nanodiscs. For instance, see in Figure~\ref{f:D3andD4Type1} the graphs $D_3$ and $D_4$ also colored as an extension of Lemma~\ref{l:centrallayer4totalc} and using analogous ideas of the algorithm that colors the considered infinite family of $D_r$ with $r=5+3k$. 
We believe that all fullerene nanodiscs $D_r$, $r \geq 2$ are Type 1, which brings additional positive evidence to Brinkmann, Preissmann and Sasaki's Conjecture~\ref{conjecturegirth5}. Thus, the results obtained in this paper lead us to propose the following conjecture.
\begin{conjecture}
    The fullerene nanodiscs $D_r$, $r \geq 2$ are Type 1.
\end{conjecture}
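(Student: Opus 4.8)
The plan is to establish the conjecture in three regimes: the small radii $r \in \{2,3,4\}$, the class $r \equiv 2 \pmod 3$ with $r \geq 5$, and the two remaining classes $r \equiv 0$ and $r \equiv 1 \pmod 3$. The first regime is already settled by the explicit $4$-total colorings exhibited for $D_2$ (Figure~\ref{f:twototalD2}) and for $D_3, D_4$ (Figure~\ref{f:D3andD4Type1}); the second is exactly the theorem proved above. It therefore remains to treat the two infinite families $r \equiv 0 \pmod 3$ ($r \geq 6$) and $r \equiv 1 \pmod 3$ ($r \geq 7$), and the strategy is to retain the hemisphere/block decomposition of Section~\ref{s:block} while relaxing the rigid ``identical blocks'' requirement that forced $r \equiv 2 \pmod 3$.

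First I would isolate precisely where the congruence $r \equiv 2 \pmod 3$ was used. Two arithmetic facts are responsible: by Lemma~\ref{l:distancevRvL} the frontier distance is $d(u_L^1,u_R^1)=4r-2$, which is divisible by $3$ exactly when $r\equiv 2\pmod 3$, so that $u_L^1$ and $u_R^1$ receive the same color under Yap's pattern; and by Fact~\ref{fact1} the number $r-2$ of hexagons between consecutive like-partitioned pentagons is a multiple of $3$ in the same range. Together these produce the circular symmetry that lets three identical copies of one block tile a hemisphere with matching colors at every junction. For $r\equiv 0$ (resp. $r\equiv 1$) $\pmod 3$ we have $4r-2\equiv 1$ (resp. $\equiv 2$) $\pmod 3$, so the two frontiers of a block differ by one (resp. two) steps of the cyclic color rotation $1\to 3\to 2\to 1$ underlying Yap's coloring, and the hexagon runs are no longer multiples of $3$; the symmetry that underpinned the tiling is lost.

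The repair I propose exploits that this frontier mismatch is governed by a single $3$-cycle $\pi$ of the three cyclic colors $\{1,2,3\}$, leaving the matching color $4$ fixed. The idea is to color the three blocks $B_{1,2},B_{3,4},B_{5,6}$ of a hemisphere by $c$, $\pi\circ c$, $\pi^2\circ c$ respectively, so that the right frontier of one block and the left frontier of the next always agree; since $\pi^3=\mathrm{id}$, the coloring closes up consistently after the three junction identifyings that reconstruct $O_H(D_r)$. To make a single $c$ admit such a $\pi$-twisted compatibility I would re-derive the within-block radial coloring of Lemma~\ref{l:blocks4colorable} for each residue: the base case near $C^r=C_6$ (which by Lemma~\ref{l:notextend} must already break the uniform color-$4$ pattern on the radial edges) and the inductive repetition on the layers $T(C^t)$ must be re-indexed so that the period-$3$ pattern in $t$ starts at the offset dictated by $r\bmod 3$, and so that the moving pair of unbalanced hexagons inside the block still joins, via each balanced-hexagon radial edge, two vertices of distinct colors. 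I would then re-verify the four compatibility conditions—matching vertex colors and matching radial-edge colors at the shared frontier, and distinct non-radial edge colors there—now modulo $\pi$, and finally re-check the cross-hemisphere Junction operation, in particular Case~2, whose semiedge offset $\lfloor (r+1)/3\rfloor$ depends on the residue, so that the two $\pi$-twisted hemisphere colorings glue without conflict.

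The main obstacle will be this last step: controlling the interaction between the radial color-permutation $\pi$ and both (i) the layer-by-layer drift of the two unbalanced hexagons within a block, whose position no longer repeats with a period dividing $3$, and (ii) the hemisphere gluing, where the offset $\lfloor (r+1)/3\rfloor$ and the parity of $r$ jointly determine which semiedges are identified. I expect the delicate case to be $r\equiv 1\pmod 3$, where the two-step rotation $\pi^2$ must be reconciled simultaneously with the unbalanced-hexagon drift and with Case~2 of the junction; verifying the absence of conflicts there, for every parity of $r$, is where the genuine new work—and the only real risk that the block method does not extend verbatim—lies.
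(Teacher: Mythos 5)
There is a fundamental mismatch here: the statement you set out to prove is not a theorem of the paper but its closing \emph{open conjecture}. The authors establish Type~1 only for the family $r=5+3k$ (via the block decomposition and Lemma~\ref{l:blocks4colorable}), exhibit explicit colorings for $D_2$, $D_3$, $D_4$, and state explicitly that classifying the remaining infinite families is work in progress. So there is no paper proof to compare against, and the only question is whether your proposal closes the gap. It does not. To your credit, you isolate correctly where the congruence $r\equiv 2\pmod 3$ enters: Lemma~\ref{l:distancevRvL} gives frontier distance $4r-2$, divisible by $3$ exactly in that residue class; Fact~\ref{fact1} makes the hexagon runs between like-partitioned pentagons multiples of $3$; and the inductive repetition in Lemma~\ref{l:blocks4colorable} (period $3$ in the layer index $t$ over $4\leq t\leq r-2$) meshes with the base case only under the same congruence. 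Your $\pi$-twist idea---coloring the three blocks of a hemisphere by $c$, $\pi\circ c$, $\pi^{2}\circ c$, exploiting $\pi^{3}=\mathrm{id}$ and the fact that $\pi$ fixes the matching color $4$---is a plausible repair for the frontier mismatch on the central layer, since Yap's cyclic coloring restricted to consecutive block frontiers does differ by exactly such a rotation.

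But the proposal stops precisely where the mathematical content begins. You do not construct the re-indexed block colorings for $r\equiv 0,1\pmod 3$, do not verify the four compatibility conditions modulo $\pi$, and do not check the cross-hemisphere junction (Case~2, with its offset $\lfloor (r+1)/3\rfloor$); you say yourself that this is ``where the genuine new work lies.'' Deferring exactly the verification that constitutes the open problem means the proposal is a research program, not a proof: every step that was already known (the small cases, the family $r=5+3k$) is cited, and every step that is new is left unverified. In particular, there is a concrete risk you flag but do not resolve: for $r\not\equiv 2\pmod 3$ the positions of the unbalanced hexagons inside a block drift from layer to layer with no period dividing $3$, so it is not even established that a \emph{single} block admits any $4$-total coloring whose two frontiers differ by the fixed rotation $\pi$ while keeping all central-layer semiedges on color $4$---and without that existence statement, the twisted tiling cannot get off the ground. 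As it stands, the conjecture remains exactly as open as the paper leaves it.
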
    

\begin{figure}[!hbt]
    \centering
    \includegraphics[scale=0.90]{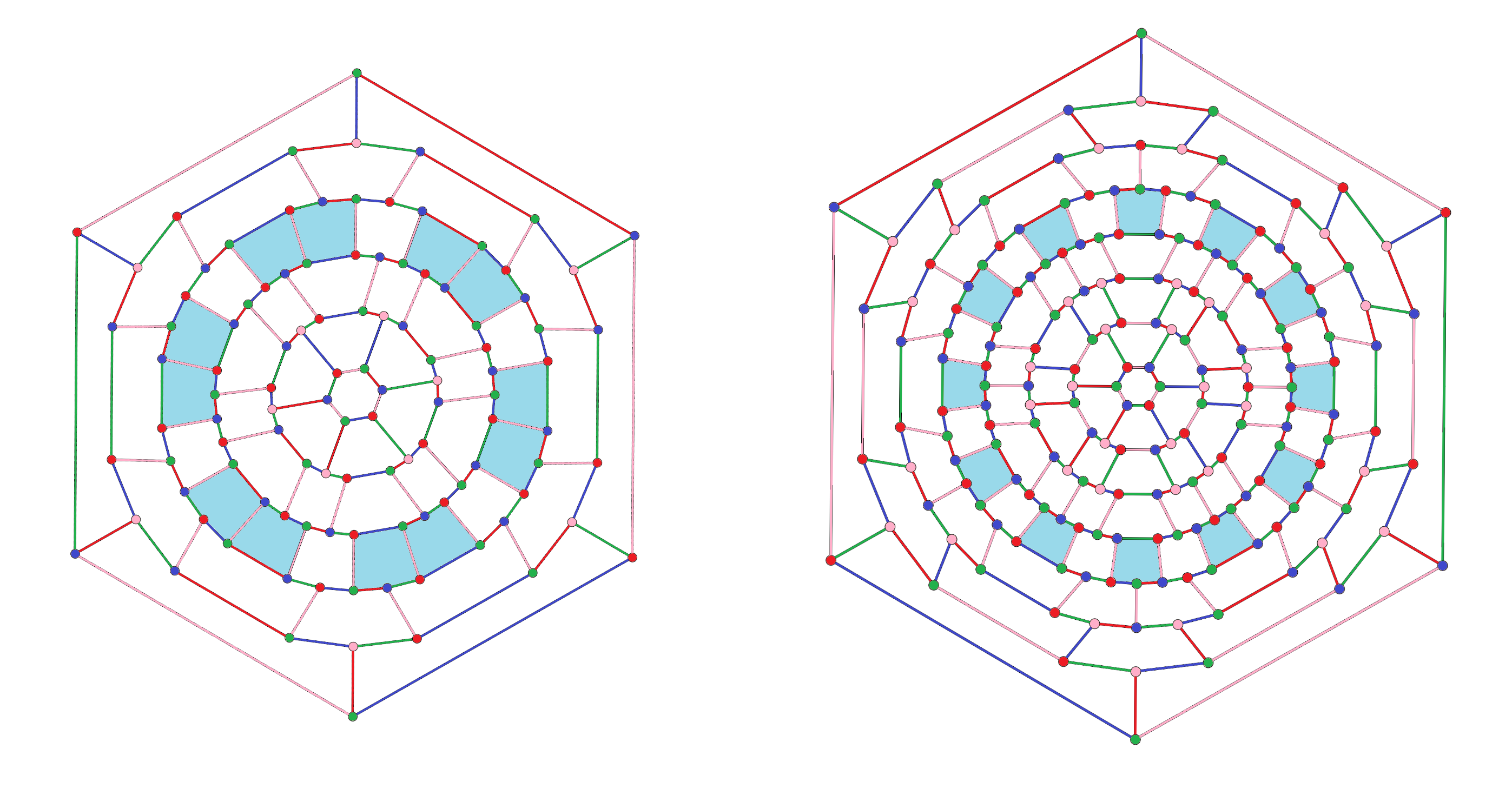}  
   \caption{A $4$-total coloring of $D_3$ and a $4$-total coloring of $D_4$, respectively.}
    \label{f:D3andD4Type1}
\end{figure}
\FloatBarrier


\bibliographystyle{abbrv}
\bibliography{bibliografia} 

\begin{thebibliography}{10}

\bibitem{andova2016}
V.~Andova, F.~Kard{\v o}s, and R.~{\v S}krekovski.
\newblock Mathematical aspects of fullerenes.
\newblock {\em Ars Mathematica Contemporanea}, 11:353–379, 2016.

\bibitem{behzad:65}
M.~Behzad.
\newblock {\em Graphs and their chromatic numbers}.
\newblock Michigan State University, 1965.

\bibitem{Brinkmann}
G.~Brinkmann, M.~Preissmann, and D.~Sasaki.
\newblock Snarks with total chromatic number 5.
\newblock {\em Discrete Mathematics and Theoretical Computer Science}, 17(1):369--382, 2015.

\bibitem{Campos2011}
C.~Campos, S.~Dantas, and C.~de~Mello.
\newblock The total-chromatic number of some families of snarks.
\newblock {\em Discrete Mathematics}, 311(12):984–988, 2011.

\bibitem{Cavicchioli2003}
A.~Cavicchioli, T.~E. Murgolo, B.~Ruini, and F.~Spaggiari.
\newblock Special classes of snarks.
\newblock {\em Acta Applicandae Mathematicae}, 76(1):57–88, 2003.

\bibitem{Dantas2023}
S.~Dantas, R.~Marinho, M.~Preissmann, and D.~Sasaki.
\newblock New results on type 2 snarks.
\newblock {\em Discussiones Mathematicae Graph Theory}, 43(4):879, 2023.

\bibitem{Gardner1976}
M.~Gardner.
\newblock Mathematical games -- snarks, boojums and other conjectures related to the four-color-map theorem.
\newblock {\em Scientific American}, 234(4):126–130, Apr. 1976.

\bibitem{c60}
H.~W. Kroto, J.~R. Heath, S.~C. O'Brien, R.~F. Curl, and R.~E. Smalley.
\newblock C60: Buckminsterfullerene.
\newblock {\em Nature}, 318(6042):162, 1985.

\bibitem{vijayaditya1971total}
N.~Vijayaditya.
\newblock On total chromatic number of a graph.
\newblock {\em Journal of the London Mathematical Society}, 2:405--408, 1971.

\bibitem{vizing:64}
V.~G. Vizing.
\newblock On an estimate of the chromatic class of a p-graph.
\newblock {\em Discret Analiz}, 3:25--30, 1964.

\bibitem{yap1996}
H.~P. Yap.
\newblock {\em Total colourings of graphs}.
\newblock Springer, 1996.

\end{thebibliography}

\end{document}